\patchcmd{\subsection}{\bfseries}{\itshape}{}{}
\definecolor{citegreen}{rgb}{0,0.8,0}
\definecolor{refred}{rgb}{0.8,0,0}
\newtheorem{theorem}{Theorem}[section]
\newtheorem{lemma}[theorem]{Lemma}
\newtheorem{corollary}[theorem]{Corollary}
\newtheorem{proposition}[theorem]{Proposition}
\theoremstyle{definition}
\newtheorem{remark}[theorem]{Remark}
\newtheorem{definition}[theorem]{Definition}
\newcommand{\R}{\mathbb R}
\numberwithin{equation}{section}
\newcounter{stepctr}
{\end{list}}
\def\XXint#1#2#3{{\setbox0=\hbox{$#1{#2#3}{\int}$}
     \vcenter{\hbox{$#2#3$}}\kern-.5\wd0}}
\DeclareMathOperator{\Vol}{Vol}
\DeclareMathOperator{\Sc}{R}
\DeclareMathOperator{\Ric}{Ric}
\DeclareMathOperator{\Rm}{Rm}
\DeclareMathOperator{\W}{W}
\DeclareMathOperator{\inj}{inj}
\newcommand{\e}{\varepsilon}
\newcommand{\norm}[1]{\| {#1} \|}
\newcommand{\scal}[2]{\langle {#1} , {#2} \rangle}
\newcommand{\mix}[3]{\big\| \| {#1} \|_{{#2}} \big\|_{{#3}}}
\newcommand{\KN}{\mathbin{\bigcirc\mspace{-15mu}\wedge\mspace{3mu}}}
\title{Mixed Integral Norms for Ricci Flow}
\author{Gianmichele Di Matteo}
\begin{document}
\maketitle
\vspace{-1cm}
\begin{abstract}
We prove that a Ricci flow cannot develop a finite time singularity assuming the boundedness of a suitable space-time integral norm of the curvature tensor. Moreover, the extensibility of the flow is proved under a Ricci lower bound and the boundedness of a space-time integral norm of the scalar curvature.
\end{abstract}
\section{Introduction}
In this paper, we will prove new extension theorems for the Ricci flow. Given a manifold $M$, a family of smooth Riemannian metrics $g(t)$ on $M$ is called a Ricci Flow on the time interval $[0,T) \subset \R$ if it satisfies
\begin{equation}
\frac{\partial g(t)}{\partial t}=-2 \Ric_{g(t)}, \ g(0)=g_0,
\end{equation}
where $\Ric$ denotes the Ricci tensor. By a well-known result of Hamilton, a Ricci flow on a closed manifold $M$ develops a singularity at a finite maximal time $T$ (i.e. the flow cannot be extended past the time $T$) if and only if the maximum of the norm of the Riemannian tensor $\Rm$ blows up at $T$, see \cite{ham1}. This result was extended by Shi \cite{shi} to complete Ricci flows with bounded geometry. Sesum (for closed manifolds) and later Ma and Cheng (for complete manifolds with bounded curvature) showed that indeed a bound on the Ricci curvature rather than the full Riemannian curvature tensor suffices to extend the flow, see \cite{ses,chm}. See also \cite{kot} for a local version of the result. In their paper, Ma and Cheng also proved the extensibility of a complete Ricci flow under the assumption of bounded scalar curvature and Weyl tensor.

A different approach was adopted by Wang in \cite{wan} and consists of considering integral bounds rather than point-wise ones.\emph{Wang's first extension theorem} (Theorem 1.1 in \cite{wan}) states that if  $(M,g(t))$ is a Ricci flow on a closed manifold $M$ satisfying the bound 
\begin{equation}
\norm{\Rm}_{\alpha,M\times[0,T)}<\infty \text{ for some } \alpha \ge \frac{n+2}{2},
\end{equation}
then the flow can be extended past time $T$. We will generalise this result in Theorem \ref{extensionth1} below.

Wang's theorem is proved via a blow-up argument exploiting the scaling invariance of the integral norm above for $\alpha=\frac{n+2}{2}$. His result was extended by Ma and Cheng to complete manifolds in \cite{chm}; similar results were then obtained, for Ricci and Mean Curvature flow, see \cite{hef,le,les,les1,les2,ye,ye1,xuy}. In particular, we remark that Theorem $1.6$ in \cite{les} considers mixed integral norms for the Mean Curvature flow case.

In the same paper, Wang was able to pass from a Riemann curvature integral bound to a scalar curvature one assuming a Ricci lower bound. More precisely, \emph{Wang's second extension theorem} (Theorem 1.2 in \cite{wan}) states that if $(M,g(t))$ is a Ricci flow on a closed manifold $M$ satisfying 
\begin{equation}
\norm{\Sc}_{\alpha,M\times[0,T)}<\infty \text{ for some } \alpha \ge \frac{n+2}{2},
\end{equation}
and $\Ric$ is uniformly bounded from below along the flow up to the time $T$, then the flow can be extended past time $T$. Our Theorem \ref{extensiontheorem2} below is an extension of this result.

Similar results for (compact) Mean Curvature Flow have then been independently obtained by Le and Sesum in \cite{les2} and Xu, Ye and Zhao in \cite{xuy}.

Here we generalise these results using mixed integral norms. For a measurable function $u$, we set
\begin{equation}
\mix{u}{\alpha,\Omega}{\beta,I} \coloneqq \bigg( \int_I{ \bigg( \int_\Omega{|u|^\alpha d\mu_{g(t)}}\bigg)^{\beta/\alpha} dt} \bigg)^{1/\beta}, \label{definitionmixednorm}
\end{equation}
where $\Omega \subseteq M$ and $I \subseteq [0,T)$.
\begin{definition}
Given a couple $(\alpha,\beta)$ of integrability exponents in $(1,\infty)$ and a dimension $n\in \mathbb{N}$, we say that the couple is \textit{optimal} (respectively \textit{super-optimal}, \textit{sub-optimal}) if
\begin{equation}
\alpha=\frac{n}{2}\frac{\beta}{\beta-1} \ \ (\text{resp.} \ \ge, \ \le ).
\end{equation}
\end{definition}
The main reason for introducing the concept of optimal couple is that the mixed integral norm of the Riemann tensor with respect to it is invariant under the parabolic scaling of the Ricci flow.
Our first theorem is a generalization of Wang's first extension theorem to mixed integral norms and complete non compact flows.
\begin{theorem}\label{extensionth1}
Let $(M,g(t))$ be a Ricci flow on a manifold $M$ of dimension $n$, defined on $[0,T)$, $T<+\infty$, and such that $(M,g(t))$ is complete and has bounded curvature for every $t$ in $[0,T)$. Suppose that the initial slice $(M,g(0))$ satisfies $\inj(M,g(0))>0$. Assume the integral bound $\mix{\Rm}{\alpha,M}{\beta,[0,T)}<+\infty$ for some super-optimal couple $(\alpha,\beta)$.
Then the flow can be extended past the time $T$.
\end{theorem}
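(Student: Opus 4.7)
The plan is to argue by contradiction following Wang's blow-up strategy, adapted to mixed norms and to the complete non-compact setting. Assume the flow cannot be extended past $T$. Since $(M,g(t))$ is complete with bounded curvature for each $t \in [0,T)$, the Hamilton-Shi extension criterion forces $\sup_{M \times [0,T)} |\Rm| = +\infty$. A Hamilton-type point-picking procedure, performed using a compact exhaustion of $M$ to cope with non-compactness, then produces sequences $(x_k,t_k) \in M \times [0,T)$ with $Q_k := |\Rm|(x_k,t_k) \to \infty$, $t_k \to T$, and $|\Rm| \leq 2 Q_k$ on parabolic neighborhoods $B_{g(t_k)}(x_k, A_k Q_k^{-1/2}) \times [t_k - A_k Q_k^{-1}, t_k]$, where $A_k \to \infty$ can be chosen with $A_k / Q_k \to 0$.

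Next I would consider the parabolically rescaled flows $g_k(s) := Q_k\, g(t_k + s/Q_k)$ for $s \in [-A_k, 0]$, so that $|\Rm_{g_k}|(x_k, 0) = 1$ and $|\Rm_{g_k}| \leq 2$ on $B_{g_k(0)}(x_k, A_k) \times [-A_k, 0]$. A direct computation using $|\Rm_{Qg}|_{Qg} = Q^{-1} |\Rm_g|_g$, $d\mu_{Qg} = Q^{n/2} d\mu_g$ and $ds = Q\,dt$ shows that the $\beta$-th power of the mixed norm of $\Rm$ scales by a factor $Q^{n\beta/(2\alpha) - \beta + 1}$, and the super-optimality condition $\alpha \geq \frac{n}{2}\frac{\beta}{\beta - 1}$ is precisely what makes this exponent non-positive. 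Consequently
\begin{equation}
\mix{\Rm_{g_k}}{\alpha,M}{\beta,[-A_k,0]}^{\beta} \;\leq\; \mix{\Rm_g}{\alpha,M}{\beta,[t_k - A_k/Q_k,\,T)}^{\beta} \;\longrightarrow\; 0 \qquad (k \to \infty),
\end{equation}
since the integration interval on the right shrinks to the point $T$ while the global mixed norm is finite (and in the strictly super-optimal case the prefactor $Q_k^{n\beta/(2\alpha) - \beta + 1}$ by itself already tends to zero).

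The third step is to extract, via Hamilton's compactness theorem for pointed Ricci flows, a smooth limit $(M_\infty, g_\infty(s), x_\infty)$, $s \in (-\infty, 0]$, of $(M, g_k(s), x_k)$. The curvature bound on parabolic neighborhoods is already in place; what is missing is a uniform positive lower bound on $\inj(M, g_k(0), x_k)$. I would obtain it by combining the initial hypothesis $\inj(M, g(0)) > 0$ with Perelman's $\kappa$-non-collapsing theorem, possibly via pseudolocality, so as to propagate a uniform local non-collapsing up to times arbitrarily close to $T$; this is the step I expect to present the main technical obstacle, since Perelman's original arguments are written for closed manifolds and their adaptation to our complete non-compact setting, using only the initial injectivity radius bound, requires some care. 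Once the limit $g_\infty$ is in hand, it is a complete ancient Ricci flow with $|\Rm_{g_\infty}|(x_\infty, 0) = 1$, so it is non-flat. On the other hand, lower semicontinuity of the mixed integral norm under smooth pointed Cheeger-Gromov convergence (via Fatou's lemma and uniform smooth convergence on compact sets) forces $\mix{\Rm_{g_\infty}}{\alpha,M_\infty}{\beta,(-\infty,0]} = 0$, whence $\Rm_{g_\infty} \equiv 0$, the desired contradiction.
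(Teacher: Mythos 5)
Your proposal is essentially the same blow-up argument as the paper: contradict extensibility via Shi, rescale at a near-curvature-maximizer, use non-collapsing to get injectivity radius bounds, extract a pointed Cheeger--Gromov limit, and kill its curvature at the basepoint using scaling of the mixed norm and dominated/Fatou convergence. Two remarks. First, the paper's point-picking is simpler than your Perelman-style procedure with compact exhaustions and neighborhoods of size $A_k\to\infty$: since $|\Rm|$ is bounded on each slice $t<T$, one can directly pick $(x_i,t_i)$ with $t_i\nearrow T$ and $|\Rm|(x_i,t_i)\ge C^{-1}\sup_{M\times[0,t_i]}|\Rm|$, which already gives the \emph{global} bound $|\Rm|\le CQ_i$ on $M\times[t_i-Q_i^{-1},t_i]$; after rescaling this yields uniform curvature bounds on the whole blown-up region without further work. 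Second, the step you flag as the main technical obstacle is not actually one: the version of Perelman's $\kappa$-non-collapsing theorem cited in the paper (Theorem $26.2$ in Kleiner--Lott) is already stated for complete Ricci flows with bounded curvature and an initial injectivity radius lower bound, so no adaptation or pseudolocality is needed, and Topping's compactness theorem (Theorem $1.6$ in \cite{top1}) then applies directly. Finally, the paper first reduces from a super-optimal to an optimal couple by H\"older in time and then uses exact scale-invariance together with dominated convergence to get the vanishing of the mixed norm on the limit; your route of keeping the super-optimal exponent and observing that the prefactor $Q_k^{n\beta/(2\alpha)-\beta+1}\le 1$ works equally well.
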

In this theorem we assumed a control on the geometry of the Ricci flow in order to set up a blow-up procedure near the singular time $T$. In the case the underlying manifold $M$ is closed this control comes for free, so we obtain an extension result under the sole integral bound.
\begin{figure}[h]\label{figure1}
\centering
\includegraphics[scale=1.7]{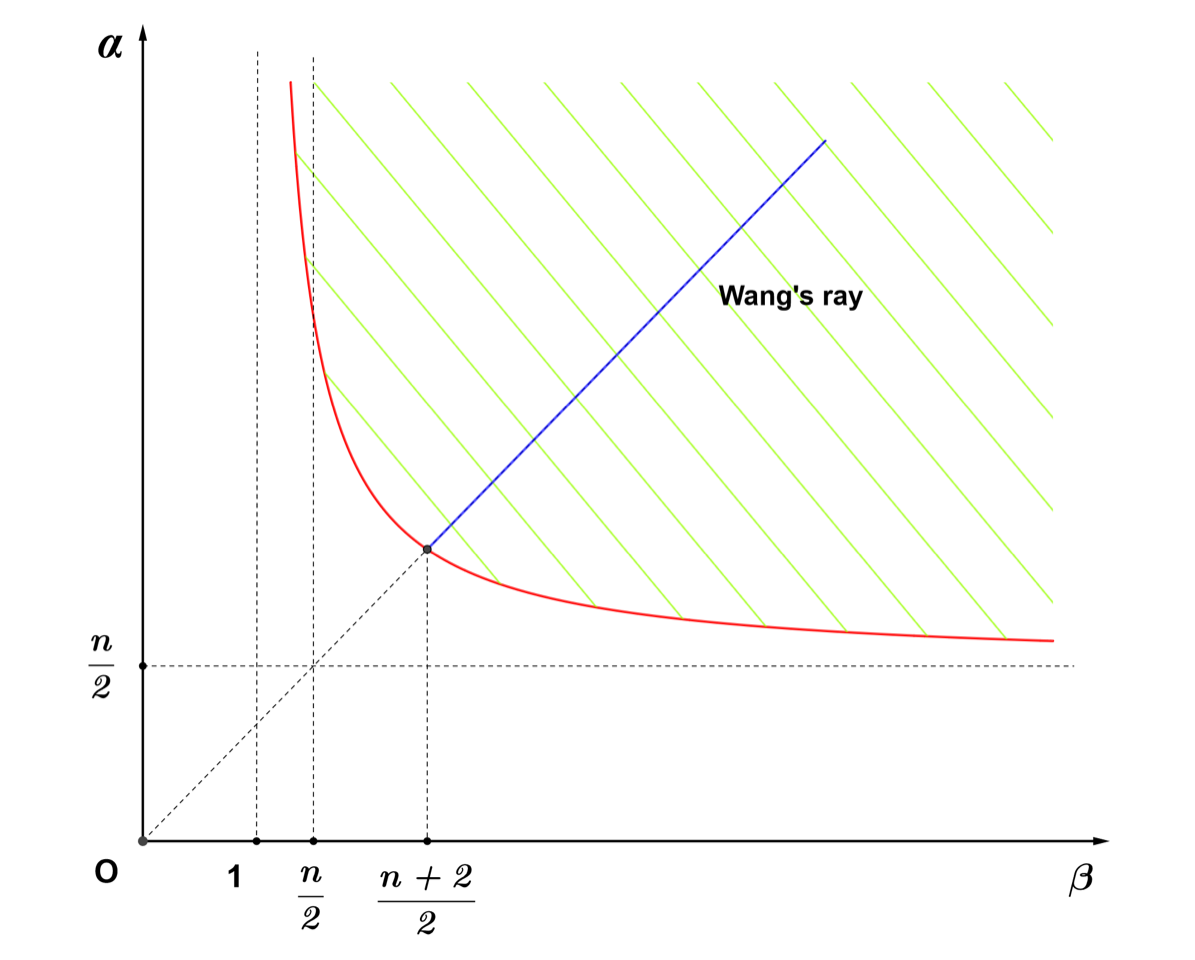}
\caption{Integrability Exponents Graphic}
\end{figure}

It is worth noticing that we can include the ``endpoint" $(\infty,1)$ but not the one $(n/2,\infty)$, even on closed manifolds, see Remark \ref{endpoints}. Moreover, this case is of particular importance after the integrability results obtained in \cite{bam,bam1,sim} for flows with bounded scalar curvature. We generalize Wang's second extension theorem as follows.
\begin{theorem}\label{extensiontheorem2}
Let $(M,g(t))$ be a Ricci flow on a manifold $M$ of dimension $n$, defined on $[0,T)$, $T<+\infty$, and such that $(M,g(t))$ is complete and has bounded curvature for every $t$ in $[0,T)$. Suppose that the initial slice $(M,g(0))$ satisfies $\inj(M,g(0))>0$. Assume the following conditions are satisfied
\begin{itemize}
\item there exists a positive constant $B$ such that $\Ric(x,t) \ge -Bg(t)$ on $M \times [0,T)$;
\item $\mix{\Sc}{\alpha,M}{\beta,[0,T)}<+\infty$ for some super-optimal couple $(\alpha,\beta)$.
\end{itemize}
Then the flow can be extended past the time $T$.
\end{theorem}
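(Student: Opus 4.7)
The plan is to argue by contradiction via a parabolic blow-up argument that parallels the proof of Theorem \ref{extensionth1}, with the Ricci lower bound being used to constrain the limit flow.

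Assume $T$ is a finite singular time, so $\sup_{M\times[0,t]}|\Rm|\to+\infty$ as $t\nearrow T$. Choose a blow-up sequence $(x_k,t_k)$ with $t_k\to T$ and $Q_k:=|\Rm|(x_k,t_k)\to+\infty$ essentially realising the curvature blow-up, and rescale parabolically to $\tilde g_k(t):=Q_k\,g(t_k+Q_k^{-1}t)$, so that $|\widetilde{\Rm}_k|(x_k,0)=1$. Using completeness, slice-wise bounded curvature, $\inj(M,g(0))>0$ and a suitable no-local-collapsing estimate---the same setup as for Theorem \ref{extensionth1}---Hamilton--Perelman compactness produces a pointed smooth limit $(M_\infty,g_\infty(t),x_\infty)$, a complete ancient Ricci flow with $|\Rm_\infty|(x_\infty,0)=1$.

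Next I extract rigidity on the limit from the two hypotheses. The Ricci lower bound rescales as $\widetilde{\Ric}_k\ge -BQ_k^{-1}\tilde g_k$ and passes to $\Ric_\infty\ge 0$ in the limit. Super-optimality of $(\alpha,\beta)$ means the rescaled scalar mixed norm on a fixed unit parabolic region is controlled by the original norm on a parabolic region of size $O(Q_k^{-1/2})$ around $(x_k,t_k)$, which tends to $0$ by absolute continuity of the integral in time; smooth convergence then forces $R_\infty\equiv 0$. Combined with $\Ric_\infty\ge 0$ and $R_\infty=\mathrm{tr}\,\Ric_\infty$, this gives $\Ric_\infty\equiv 0$, so $(M_\infty,g_\infty)$ is a complete static Ricci-flat manifold with $|\Rm_\infty|(x_\infty,0)=1$.

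The main obstacle is to derive an actual contradiction from this Ricci-flat limit, since complete non-trivial Ricci-flat manifolds with bounded curvature do exist in general. My preferred way around this is to sidestep the rigidity question and instead upgrade the hypothesis directly in the original flow: the pointwise inequality $|\Ric|\le R+C(n,B)$---which follows from $\Ric+Bg\ge 0$ and $|\Ric+Bg|\le\mathrm{tr}(\Ric+Bg)$---combined with the volume distortion $g(t)\le e^{2Bt}g(0)$, produces $\mix{\Ric}{\alpha,M}{\beta,[0,T)}<\infty$ (after suitable localisation in the non-compact case). A parabolic Moser iteration on the evolution equation of $|\Rm|^2$, absorbing the $|\Rm|\cdot|\Ric|$ reaction term via the integrated Ricci bound, should then bootstrap this to $\mix{\Rm}{\alpha,M}{\beta,[0,T)}<\infty$, at which point Theorem \ref{extensionth1} concludes. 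Making this Moser iteration quantitatively convergent in the mixed-norm setting, particularly on non-compact $M$ where cutoffs and time-weighted Sobolev inequalities require care, is expected to be the technical heart of the argument.
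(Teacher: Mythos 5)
Your first route is carefully argued up to the point where you honestly flag that it stalls: the blow-up at the maximal Riemann curvature scale yields a complete Ricci-flat ancient solution with $|\Rm_\infty|(x_\infty,0)=1$, and since non-trivial complete Ricci-flat manifolds with bounded curvature exist this is not a contradiction. You are right that the argument cannot close here, and this is precisely why the paper does \emph{not} rescale by $|\Rm|$.

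Your second route, however, contains a genuine gap. The reaction term in the evolution of $|\Rm|^2$ along Ricci flow is cubic in $\Rm$, schematically $\partial_t|\Rm|^2 \le \Delta|\Rm|^2 - 2|\nabla\Rm|^2 + c(n)|\Rm|^3$; it is \emph{not} of the form $|\Rm|\cdot|\Ric|$. (The full evolution $\partial_t \Rm = \Delta\Rm + \Rm*\Rm$ has a quadratic algebraic term involving the full curvature operator, not just the Ricci tensor.) Consequently, writing the inequality as $\partial_t u \le \Delta u + f u$ with $u = |\Rm|$ forces $f \sim |\Rm|$, for which no integral bound is available --- that is exactly the quantity whose boundedness is in question. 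The derived bound $\mix{\Ric}{\alpha,M}{\beta,[0,T)}<\infty$ cannot absorb this term, so Moser iteration does not bootstrap to $\mix{\Rm}{\alpha,M}{\beta,[0,T)}<\infty$ and Theorem \ref{extensionth1} cannot be invoked. In short, the hypothesis you derived controls the wrong coefficient.

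The paper's actual strategy avoids both difficulties. It rescales at the \emph{scalar} curvature scale $Q_i=\Sc(x_i,t_i)$, accepting that this gives no smooth limit, and instead works with the rescaled flows $g_i$ directly. Perelman's non-collapsing (Theorem \ref{nonlocalcollapsingtheorem}) plus the scalar curvature bound on the rescaled parabolic region gives a volume lower bound, which through Wang's uniform Sobolev constant theorem (Theorem \ref{uniformsobolevth}) yields a Sobolev inequality on $B_{g_i(1)}(x_i,r)$ uniform in $i$. The Moser iteration is then run on the \emph{scalar curvature equation} $\partial_t \Sc = \Delta\Sc + 2|\Ric|^2$, where the Ricci lower bound turns $2|\Ric|^2$ into a coefficient controlled by $\Sc$ itself, and the scale-invariance of the optimal mixed norm makes $\mix{\Sc_i}{\alpha,\Omega_i}{\beta,[0,1]}\to 0$, providing the smallness needed to absorb the quadratic reaction (Theorem \ref{optimalmoser}). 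The resulting Moser--Harnack estimate forces $\norm{(\Sc_i)_+}_{\infty,D_i'}\to 0$, contradicting the normalisation $\Sc_i(x_i,1)=1$. An additional technical point you would also have to confront, and which the paper handles via the improved integrability result (Proposition \ref{betterintegrability}), is that in the optimal case the coefficient has exactly critical integrability and cannot be split off directly; one must first bootstrap to a strictly super-optimal exponent before running the absorption.
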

As for Wang's result, the method of the proof relies on developing a Moser iteration along the Ricci flow, in order to get a Moser-Harnack inequality for the scalar curvature. Then one can apply this inequality to a rescaled sequence of flows and deduce the result by a contradiction argument.
The main additional difficulty that we have to overcome compared to Wang's case, is that we have to deal with a temporal integrability exponent lower than $n/2$, compare with Figure \ref{figure1}. In order to deal with it, we need to develop a further iteration procedure for showing the better integrability result in Proposition \ref{betterintegrability}.

The paper is organized as follows. In Section \ref{section1}, we recall some standard results in the theory of Ricci flow and we give a proof of Theorem \ref{extensionth1}. In Section \ref{section 2} we first set up the necessary Moser iteration argument and then show Theorem \ref{extensiontheorem2}.
\section*{Acknowledgements}
I would like to express my gratitude to my supervisor Reto Buzano for his patient guidance, constant support and helpful suggestions.
\section{Preliminary Material and First Results}\label{section1}
In this section we recollect some useful results about Ricci flow and we prove some basic results on mixed integral norms in Ricci flow. These results are then combined to prove Theorem \ref{extensionth1}.
\subsection{Standard Results for the Singularity Analysis}
Firstly, we recall Perelman's definition of non local-collapsing and his non local-collapsing theorem, as presented in \cite{kle}. This theorem is particularly important in blow-up arguments, since it yields uniform injectivity radii lower bounds along a sequence of rescaling under mild geometric assumptions.
\begin{definition}[Definition $26.1$ in \cite{kle}]\label{definitioncollapsing}
We say that a Ricci flow solution $(M,g(t))$ defined on $[0,T)$ is \textit{$\kappa-$noncollapsed on the scale $\rho$}, if for every $r<\rho$ and $(x_0,t_0)\in M\times [0,T)$ with $t_0>r^2$, the condition $|\Rm|\le r^{-2}$ on the domain $B_{g(t_0)}(x_0,r)\times[t_0-r^2,t_0]$ implies $\Vol_{g(t)}(B_{g(t_0)}(x_0,r))\ge \kappa r^n$ for every $t\in[t_0-r^2,t_0]$.
\end{definition}
\begin{theorem}[Theorem $26.2$ in \cite{kle}]\label{nonlocalcollapsingtheorem}
For any given $n \in \mathbb{N}$, $T,K<+\infty$, and $\rho,c>0$, there exists $\kappa=\kappa(n,T,\rho,K,c)>0$ such that we have the following. Let $(M,g(t))$ be a Ricci flow on a manifold $M$ of dimension $n$, defined on $[0,T)$, $T<+\infty$, and such that $(M,g(t))$ is complete and has bounded curvature for every $t$ in $[0,T)$. Assume that $(M,g(0))$ is complete, with $|\Rm|_{g(0)}\le K$ and $\inj(M,g(0))\ge c$. Then the flow $g(t)$ is $\kappa-$noncollapsed on the scale $\rho$.
\end{theorem}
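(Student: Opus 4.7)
The plan is to prove Theorem~\ref{nonlocalcollapsingtheorem} along Perelman's original strategy, based on the monotonicity of his $\W$-entropy. Recall that for a metric $g$ on $M$, a smooth function $f$, and a scale $\tau>0$, one defines
\[
\W(g,f,\tau) \coloneqq \int_M \bigl[\tau(|\nabla f|^2 + \Sc) + f - n\bigr](4\pi\tau)^{-n/2}e^{-f}\,d\mu_g,
\]
and sets $\mu(g,\tau) \coloneqq \inf\bigl\{\W(g,f,\tau) : \int_M (4\pi\tau)^{-n/2}e^{-f}\,d\mu_g = 1\bigr\}$. The fundamental identity of Perelman, proved via the evolution of $f$ along a backwards conjugate heat equation coupled with the Ricci flow, yields that $t\mapsto \mu(g(t),\tau_0-t)$ is non-decreasing whenever $\tau_0>t$.

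The first step is to use the initial bounded geometry to obtain a uniform lower bound
\[
\mu(g(0),\tau) \ge -C_0(n,T,\rho,K,c) \qquad \text{for all } \tau \in (0,T+\rho^2].
\]
This follows from $|\Rm|_{g(0)}\le K$ and $\inj(M,g(0))\ge c$ by a standard comparison argument: on balls of radius $r_0 \coloneqq \min(c,K^{-1/2})$, $(M,g(0))$ is quasi-isometric to a Euclidean ball with dimensional constants; combining this with a partition of unity and the sharp Gaussian logarithmic Sobolev inequality, one transfers a log-Sobolev inequality to $(M,g(0))$ at all scales $\sqrt{\tau}\le\sqrt{T+\rho^2}$, which is equivalent to the desired lower bound for $\mu(g(0),\tau)$.

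Next, given $(x_0,t_0)$ and $r<\rho$ satisfying the hypothesis $|\Rm|\le r^{-2}$ on $B_{g(t_0)}(x_0,r)\times[t_0-r^2,t_0]$, I would evaluate $\W(g(t_0),\cdot,r^2)$ on a test function $f$ defined through $e^{-f}=c_0\,\eta(d_{g(t_0)}(\cdot,x_0)/r)^2(4\pi r^2)^{n/2}/V$, where $\eta$ is a fixed smooth cutoff with $\eta(s)=1$ for $s\le 1/2$, $\eta(s)=0$ for $s\ge 1$, $V \coloneqq \Vol_{g(t_0)}(B_{g(t_0)}(x_0,r))$, and $c_0$ is determined by the normalisation constraint. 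The assumption $|\Rm|\le r^{-2}$ gives $|\Sc|\le n\,r^{-2}$ inside $B_{g(t_0)}(x_0,r)$, and $|\nabla f|^2\lesssim r^{-2}$ comes from differentiating $\eta$; a direct computation then yields
\[
\W(g(t_0),f,r^2) \le -\log(V/r^n) + C(n).
\]
Combined with monotonicity, $-C_0\le \mu(g(0),t_0+r^2)\le \mu(g(t_0),r^2)\le \W(g(t_0),f,r^2)$, giving $V\ge\kappa\,r^n$ for some $\kappa=\kappa(n,T,\rho,K,c)>0$.

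Finally, to pass from the bound at $t_0$ to a bound at any $t\in[t_0-r^2,t_0]$, I would use $|\Ric|\le (n-1)r^{-2}$ on the parabolic cylinder: this forces $g(t)$ and $g(t_0)$ to be uniformly comparable on $B_{g(t_0)}(x_0,r)$ up to a dimensional factor $e^{C(n)}$, so $\Vol_{g(t)}(B_{g(t_0)}(x_0,r))\ge e^{-C(n)n/2}\kappa\,r^n$, and the constant can be reabsorbed into $\kappa$. The main obstacle is Perelman's monotonicity itself: the derivation of $\tfrac{d}{dt}\mu(g(t),\tau_0-t)\ge 0$ is a delicate Li--Yau-type computation that requires passing through a minimiser for the constrained problem and controlling the evolution of the associated $f$. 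Once monotonicity and the initial lower bound on $\mu$ are granted, the test-function computation and the volume comparison are essentially bookkeeping, but one must keep careful track of how $\kappa$ depends on $n,T,\rho,K,c$.
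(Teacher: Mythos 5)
This theorem is not proved in the paper at all; the paper cites it as a black-box result from Kleiner--Lott (Theorem 26.2 in \cite{kle}), so there is no ``paper's own proof'' to compare against. Your sketch does follow the standard Perelman $\W$-entropy route, which is indeed how Kleiner--Lott establish the result, and the overall skeleton --- lower bound on $\mu(g(0),\tau)$ from initial bounded geometry, monotonicity of $\mu(g(t),\tau_0-t)$, a concentrated test function at scale $r$ at time $t_0$, then parabolic volume comparison to cover $t\in[t_0-r^2,t_0]$ --- is the right one.

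There is, however, a sign error in the pivotal estimate that breaks the logic as written. With your test function $e^{-f}=c_0\,\eta^2(4\pi r^2)^{n/2}/V$, the normalisation forces $c_0 = V/\int\eta^2\,d\mu\in[1,C(n)]$ (the upper bound requiring Bishop--Gromov under the curvature hypothesis), and the constant part of $f$ is $\log V - \log c_0 - \tfrac{n}{2}\log(4\pi r^2)$, whose weighted integral contributes $+\log(V/r^n)+O(1)$ to $\W$. The gradient, scalar curvature and $-\eta^2\log\eta$ terms each contribute $O(1)$ after a volume-doubling bound, so the correct estimate is $\W(g(t_0),f,r^2)\le \log(V/r^n)+C(n)$, not $-\log(V/r^n)+C(n)$. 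Note this is what you need: when $V/r^n$ is small, $\W$ must be very negative, and the chain $-C_0\le\mu(g(0),t_0+r^2)\le\mu(g(t_0),r^2)\le\W$ then forces $V/r^n\ge e^{-C_0-C(n)}$. As written, your inequality would instead produce an \emph{upper} bound on $V/r^n$ and no contradiction. You should also flag explicitly that the bound on the gradient and $-\eta^2\log\eta$ integrals uses the doubling estimate $V/\Vol_{g(t_0)}(B(x_0,r/2))\le C(n)$ coming from $|\Rm|\le r^{-2}$. Finally, the issue you identify as ``the main obstacle'' is even more delicate here than in the compact case: on a complete noncompact manifold the existence of a minimiser for $\mu$ and the integrations by parts in the $\W$-monotonicity are both nontrivial, and Kleiner--Lott in fact circumvent the minimisation step by evolving a specific test function backward under the conjugate heat equation and applying the pointwise monotonicity of $\W$ along that evolution.
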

Under the same conditions of the theorem, it is sufficient to have the scalar curvature bound $\Sc \le r^{-2}$ to get the non-collapsing of Definition \ref{definitioncollapsing}.
The following compactness theorem, slightly generalising Hamilton's classical compactness result from \cite{ham2}, is taken from Topping \cite{top1}. See also \cite{top2} for an expository review and first applications.
\begin{theorem}[Theorem $1.6$ in \cite{top1}]\label{compactness}
Let $(M_i,g_i(t),p_i)$ be a sequence of pointed and complete Ricci flows, defined on a common time interval $(a,b)$, with $-\infty\le a<0<b\le +\infty$. Suppose that
\begin{itemize}
\item $\inf \inj(M_i,g_i(0),p_i)>0$,
\item there exists a constant $M$ such that for every $r>0$ there exists $i_r \in \mathbb{N}$, such that for every $i\ge i_r$ and $t\in(a,b)$ we have
\begin{equation}
\sup_{B_{g_i(0)}(p_i,r)} |\Rm|_{g_i(t)}\le M.
\end{equation}
\end{itemize}
Then there exists a complete pointed Ricci flow $(M_\infty,g_\infty(t),p_\infty)$ defined on $(a,b)$, which is a pointed smooth Cheeger-Gromov limit of some subsequence of $(M_i,g_i(t),p_i)$.
\end{theorem}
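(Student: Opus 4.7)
The plan is to adapt Hamilton's classical compactness theorem \cite{ham2} to the local nature of the curvature hypothesis. In the original statement, a single global curvature bound on all of $M_i$ is assumed, while here the bound only applies on time-$0$ balls $B_{g_i(0)}(p_i,r)$ for each fixed radius $r$, and only for sufficiently large $i$. Accordingly, the natural strategy is to first prove a version of the result on each such ball and then combine the resulting local limits by a diagonal procedure.

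For the first step, I would fix $r>0$ and work with the family of smooth metrics $g_i(t)$ restricted to $B_{g_i(0)}(p_i,r)$ for $i\ge i_r$. The uniform bound $|\Rm|_{g_i(t)}\le M$ together with the Ricci flow equation implies $\partial_t g_i(t) = -2\Ric_{g_i(t)}$ is uniformly bounded, so the metrics $g_i(t)$ and $g_i(0)$ are uniformly equivalent on a slightly smaller ball $B_{g_i(0)}(p_i, r')$ for $r'<r$ (with distortion controlled by $e^{2M|t|}$ type factors). Next, Shi's interior derivative estimates provide uniform bounds $|\nabla^k \Rm|_{g_i(t)} \le C_k(r',M,\tau)$ on $B_{g_i(0)}(p_i,r')$ for $t$ in any compact subinterval of $(a,b)$ of distance at least $\tau>0$ from the endpoints. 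Combined with the assumption $\inf \inj(M_i,g_i(0),p_i)>0$, a standard argument (for example Cheeger--Gromov--Taylor) transfers the injectivity radius bound from time $0$ to later times on smaller balls, so the hypotheses of classical Cheeger--Gromov compactness for Riemannian manifolds with bounded geometry are in force at every time slice uniformly.

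With these uniform bounds in hand, I would extract a subsequence so that the initial slices $(B_{g_i(0)}(p_i,r'),g_i(0),p_i)$ converge in the pointed $C^\infty$ Cheeger--Gromov sense to a limit Riemannian ball. Then for each $t$ in a countable dense subset of $(a,b)$ the metrics $g_i(t)$ converge, after pulling back by the diffeomorphisms realizing the Cheeger--Gromov convergence, on the limit ball, and uniform bounds on time derivatives (again from Ricci flow plus Shi's estimates) together with Arzela--Ascoli promote this to smooth convergence on compact subsets of the limit ball $\times$ $(a,b)$. Passing to the limit in the Ricci flow equation produces a Ricci flow on that limit ball. This gives the result on each fixed scale $r'$.

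Finally I would implement the diagonal argument: choose an exhausting sequence $r_k \to \infty$, apply the above on each $r_k$, and diagonalize to obtain a single subsequence that converges on every compact set of the eventual limit manifold. The limits for different $r_k$ are compatible via uniqueness of Cheeger--Gromov limits, so they patch to a complete pointed Ricci flow $(M_\infty,g_\infty(t),p_\infty)$ defined on $(a,b)$. Completeness of $g_\infty(t)$ follows from completeness of each $(M_i,g_i(t))$ and the uniform bi-Lipschitz equivalence of $g_i(t)$ with $g_i(0)$ on every fixed ball. The main technical obstacle I would expect is the handling of the gap between the balls $B_{g_i(0)}(p_i,r)$ (defined at time $0$) and the actual geometry at time $t$: one must show that the Cheeger--Gromov limit ball at time $0$ is in fact mapped inside the region where uniform estimates are available for all $t\in(a,b)$, which is exactly what the $e^{2M|t|}$ distortion together with a slight shrinkage $r'<r$ buys, provided one is careful in choosing $r'$ depending on the compact subinterval of $(a,b)$ of interest.
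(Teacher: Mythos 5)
The paper itself does not prove this theorem; it is quoted verbatim from Topping \cite{top1} (Theorem~1.6 there) and used as a black box in the blow-up arguments of Theorems~\ref{extensionth1} and~\ref{extensiontheorem2}, so there is no internal proof to compare against. Your sketch reproduces the right strategy behind Topping's result: localise Hamilton's compactness argument by combining the ball-by-ball curvature hypothesis with Shi's interior derivative estimates, uniform bi-Lipschitz control between $g_i(t)$ and $g_i(0)$, a Cheeger--Gromov--Taylor-type lower bound on the injectivity radius near the basepoint, pointed Riemannian Cheeger--Gromov compactness of the $t=0$ slices, Arzel\`a--Ascoli in the time variable, and a diagonal argument over radii. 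Structurally this matches the cited proof and is sound.

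Two points worth tightening. First, you do not actually need to propagate injectivity radius bounds to later time slices: once the $t=0$ slices $(M_i,g_i(0),p_i)$ converge in the pointed Cheeger--Gromov sense, the limit manifold and the embeddings are fixed, and the one-parameter family $g_i(t)$ is then controlled on the limit charts via Shi's derivative estimates and Arzel\`a--Ascoli, with the bi-Lipschitz bound $e^{-2(n-1)M|t|}g_i(0)\le g_i(t)\le e^{2(n-1)M|t|}g_i(0)$ ensuring that every compact subset of the limit slice stays inside the region of controlled geometry for $t$ ranging over a compact subinterval of $(a,b)$. Second, the appeal to ``uniqueness of Cheeger--Gromov limits'' to patch the scales $r_k$ is better replaced by the standard nested diagonal extraction: choose the subsequence for $r_1$, refine it for $r_2$ (automatically retaining convergence on $B(p_\infty,r_1)$), and so on, then take the diagonal subsequence. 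This avoids any reliance on a uniqueness statement and produces the limit flow on the exhaustion directly.
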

We conclude recalling the orthogonal decomposition for the Riemann tensor
\begin{equation}\label{rmdecomposition}
\Rm=-\frac{\Sc}{2(n-1)(n-2)}g \KN g+\frac{1}{n-2} \Ric \KN g+\W,
\end{equation}
where we denoted by $\W$ the Weyl tensor and by $\KN$ Kulkarni-Nomizu's product of $(0,2)-$tensors.
\subsection{Mixed Integral Norms}
Let us now consider mixed integral norms as defined in (\ref{definitionmixednorm}) along the Ricci flow. Our first result is a lemma that will be used in the Moser's iteration argument.
\begin{lemma}\label{parameterstoinfty}
Let $(M,g(t))$ be a Ricci flow defined in $[0,T]$, and fix a subset $\Omega' \subset M$ such that $0<c\le \Vol_{g(t)}(\Omega')\le C<+\infty$ for every $t$.
Then we have for any measurable $u$
\begin{equation}
\lim_{(a,b) \rightarrow (+\infty,+\infty)}{\mix{u}{a,\Omega'}{b,[0,T]}}=\sup_{\Omega' \times [0,T]}{|u|(x,t)}.
\end{equation}
\end{lemma}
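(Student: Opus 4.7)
The plan is to decompose the limit into two separate inequalities (upper and lower bound) with the limiting value $L := \sup_{\Omega'\times[0,T]}|u|$, treating the cases $L = \infty$ and $L < \infty$ along the way. The essential tool is that, because $[0,T]$ is compact and the metrics $g(t)$ vary smoothly, the product measure $d\mu_{g(t)}\,dt$ is comparable to $d\mu_{g(0)}\,dt$ with uniformly bounded densities, so Fubini's theorem applies and ``positive space-time measure'' is unambiguous.

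For the upper bound, I would use the trivial pointwise estimate $|u(x,t)| \le L$ (almost everywhere) together with the volume control $\Vol_{g(t)}(\Omega') \le C$ to get, for almost every $t$,
\begin{equation}
\| u(\cdot,t)\|_{a,\Omega'} \le L \cdot \Vol_{g(t)}(\Omega')^{1/a} \le L \cdot C^{1/a}.
\end{equation}
Integrating in time and using $|[0,T]| = T < \infty$ yields $\mix{u}{a,\Omega'}{b,[0,T]} \le L \cdot C^{1/a} \cdot T^{1/b}$, which converges to $L$ as $(a,b)\to(\infty,\infty)$.

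For the lower bound, I fix any $L' < L$ and set $E := \{(x,t) \in \Omega'\times[0,T] : |u(x,t)| > L'\}$, which has strictly positive space-time measure by definition of the supremum. Applying Fubini to the indicator $\chi_E$ with respect to $d\mu_{g(t)}\,dt$, the function $t \mapsto \eta(t) := \Vol_{g(t)}(E_t)$ is measurable with $\int_0^T \eta(t)\,dt > 0$, where $E_t$ denotes the $t$-slice of $E$. Decomposing $[0,T]$ as $\bigcup_{k\in\mathbb N} \{t : \eta(t) > 1/k\}$, one of these level sets must have positive measure; call it $F$, and fix $\delta > 0$ such that $\eta(t) \ge \delta$ for $t \in F$. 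Then for $t \in F$, $\|u(\cdot,t)\|_{a,\Omega'} \ge L' \cdot \delta^{1/a}$, and integrating over $F$ gives
\begin{equation}
\mix{u}{a,\Omega'}{b,[0,T]} \ge L' \cdot \delta^{1/a} \cdot |F|^{1/b}.
\end{equation}
Sending $(a,b)\to(\infty,\infty)$ and then letting $L' \nearrow L$ completes the lower bound.

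There is no real obstacle here: the argument is a standard compact-measure-space fact that $\|\cdot\|_p \to \|\cdot\|_\infty$, applied once inside and once outside the mixed norm. The only delicate point is the Fubini step, where one must be sure that the volume lower bound $c \le \Vol_{g(t)}(\Omega')$ is actually unused (it is only the uniform \emph{upper} bound that enters the estimates) and that the comparability of $d\mu_{g(t)}$ with $d\mu_{g(0)}$ on the compact time interval legitimizes the manipulation of the space-time measure.
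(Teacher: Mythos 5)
Your proof is correct and arrives at the same conclusion, but by a somewhat different route than the paper. The paper first establishes a separate lemma for the \emph{averaged} quantity $\phi(a,b)$ (with $\fint_{\Omega'}$ and $\fint_0^T$ in place of $\int$), and inside that proof it obtains the uniform lower bound $\Vol_{g(t)}(E_t)\ge\delta_{\e'}$ on a positive-measure set of times by invoking Lusin's theorem to produce a compact subset $A'\subset A$ on which $f(t)=\Vol_{g(t)}(E_t)$ is continuous, hence bounded below; Lemma~\ref{parameterstoinfty} is then deduced by comparing $\mix{u}{a,\Omega'}{b,[0,T]}$ with $\phi(a,b)$ via the two-sided volume bound $c\le\Vol_{g(t)}(\Omega')\le C$. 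You instead work directly with the mixed norm and replace Lusin with the more elementary super-level-set decomposition of $\{t:\eta(t)>0\}=\bigcup_k\{t:\eta(t)>1/k\}$ (note: not of all of $[0,T]$, a small imprecision in your write-up), concluding that one level set has positive measure since $\int_0^T\eta\,dt=|E|>0$. This is cleaner and avoids an unnecessary appeal to Lusin. Your observation that the lower volume bound $c>0$ plays no real role is also correct: in the paper's argument $c$ enters only because one divides by $\Vol_{g(t)}(\Omega')$ to form averages and because the deduction step produces a harmless factor $c^{1/a}\to 1$, whereas your direct argument only ever uses the upper bound $C$ (and the finiteness of $T$). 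Both approaches handle $L=\infty$ in the same way, via the lower bound being valid for every finite $L'$.
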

The proof of this lemma directly follows from a standard calculation. For the reader's convenience, we carry this out in the Appendix A.
Before applying Moser's iteration, we need some control on the volume of the domain in consideration. Below we prove a generalization of Property $2.3$ in \cite{wan}.
\begin{lemma}\label{volumebound}
Let $(M,g(t))$ be a Ricci flow defined on $[0,T]$. For a fixed point $p\in M$ and radius  $r$, we set $\Omega \coloneqq B_{g(T)}(p,r)$. Suppose there exists a constant $B>0$ such that $\Ric(x,t)\ge-Bg(t)$ on $\big( \Omega\times[0,T] \big) \cup \big(M \times \{ T\} \big)$. Then there exists a constant $\tilde{V}=\tilde{V}(n,r,T,B)\ge 1$ such that
\begin{equation}
\mix{1}{\alpha,\Omega}{\beta,[0,T]} \le \tilde{V}
\end{equation}
for every $\alpha,\beta \ge 1$. Moreover, $\tilde{V}$ is bounded as long as $n,r,T$ and $B$ remain bounded as well.
\begin{proof}
From the lower Ricci bound on the region $\Omega\times[0,T]$, and the evolution equation for the volume element, we get
\begin{equation}
 \Vol_{g(t)}(\Omega)\le e^{(n-1)BT} \Vol_{g(T)}(\Omega)
\end{equation}
for every $t \in [0,T]$. The lower bound on the whole final time slice allows us to use Bishop-Gromov's inequality, which gives the existence of a constant $V=V(n,r,B)$ such that
\begin{equation}
 \Vol_{g(T)}(\Omega)\le V.
\end{equation}
Therefore, we easily obtain the following chain of inequalities
\begin{equation}
\mix{1}{\alpha,\Omega}{\beta,[0,T]} \le \norm{(e^{(n-1)BT} V)^{\frac{1}{\alpha}}}_{\beta,[0,T]} \le \tilde{V},
\end{equation}
where we have set $\tilde{V}=\max \{e^{(n-1)BT} T^{\frac{1}{\beta}} V^{\frac{1}{\alpha}},1 \}$.
\end{proof}
\end{lemma}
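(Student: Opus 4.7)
The plan is to reduce the estimate to a uniform-in-$t$ upper bound on $\Vol_{g(t)}(\Omega)$, since once $\Vol_{g(t)}(\Omega)\le V_0$ is established for every $t\in[0,T]$, the mixed integral norm of the constant function $1$ is immediate:
\[ \mix{1}{\alpha,\Omega}{\beta,[0,T]} \le V_0^{1/\alpha}\,T^{1/\beta}, \]
which is precisely of the form of the $\tilde V$ claimed in the statement (after taking a maximum with $1$ to handle the mild edge cases as $\alpha,\beta\to 1$).

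The uniform volume bound itself I would obtain in two steps, exploiting the two different Ricci hypotheses separately. First, the \emph{global} lower bound $\Ric_{g(T)}\ge -Bg(T)$ on the final time slice $M\times\{T\}$ is precisely what is needed to apply the Bishop--Gromov volume comparison theorem to the $g(T)$-ball $\Omega = B_{g(T)}(p,r)$, producing a constant $V=V(n,r,B)$ with $\Vol_{g(T)}(\Omega)\le V$. Second, to transfer this estimate to intermediate times I would use the evolution equation $\partial_t d\mu_{g(t)} = -\Sc_{g(t)}\, d\mu_{g(t)}$ together with the Ricci lower bound on $\Omega\times[0,T]$, which implies $\Sc\ge -nB$ there and controls the metric via $\partial_t g\le 2Bg$. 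A Grönwall-type integration then yields $\Vol_{g(t)}(\Omega)\le e^{cBT}\Vol_{g(T)}(\Omega)$ for a suitable dimensional constant $c$ (of order $n$ or $n-1$).

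The main subtlety is the asymmetric distribution of the Ricci hypothesis: the bound is global only at $t=T$, whereas at earlier times we only have it inside $\Omega$. This is exactly what forces us to apply Bishop--Gromov only on the final slice and to rely on the volume-element evolution to propagate the estimate to intermediate times; a naive attempt to apply Bishop--Gromov at every time $t$ would fail because we have no global control on $\Ric$ at times $t<T$. Once the uniform bound $\Vol_{g(t)}(\Omega)\le e^{cBT}V$ is in place, raising to the $1/\alpha$ power in space, integrating the resulting constant on $[0,T]$ in the $\beta$ variable, and setting $\tilde V$ equal to the maximum of the outcome and $1$ completes the proof, with the final constant depending only on $n$, $r$, $T$ and $B$ as required.
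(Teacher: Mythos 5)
Your proposal follows the paper's own proof essentially verbatim: Bishop--Gromov on the $t=T$ slice using the global Ricci lower bound there, then the volume-element evolution plus the local Ricci bound on $\Omega\times[0,T]$ to propagate to earlier times, and finally raising to the $1/\alpha$ power and integrating in $t$. So in terms of structure and ideas you have reconstructed exactly what the paper does.

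However, there is a genuine gap in the Gr\"onwall step --- one that your proposal shares with the paper and does not fix. From $\Ric\ge -Bg$ on $\Omega\times[0,T]$ one gets $\Sc\ge -nB$ there, and the volume-element evolution $\partial_t d\mu_{g(t)}=-\Sc_{g(t)}\,d\mu_{g(t)}$ then yields
\begin{equation}
\frac{d}{dt}\Vol_{g(t)}(\Omega)=-\int_\Omega \Sc_{g(t)}\,d\mu_{g(t)}\le nB\,\Vol_{g(t)}(\Omega),
\end{equation}
which by Gr\"onwall gives $\Vol_{g(T)}(\Omega)\le e^{nB(T-t)}\Vol_{g(t)}(\Omega)$, i.e.\ an upper bound on the \emph{final}-time volume in terms of earlier-time volumes. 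This is the opposite of what you (and the paper) need: to bound $\Vol_{g(t)}(\Omega)$ for $t<T$ by $\Vol_{g(T)}(\Omega)$ you would need an \emph{upper} bound on $\Sc$ (equivalently a lower bound on $\partial_t \Vol_{g(t)}(\Omega)$), which the hypothesis $\Ric\ge -Bg$ does not provide. The round shrinking sphere with $B=0$ illustrates the problem: $\Vol_{g(t)}(\Omega)$ is strictly decreasing in $t$, so $\Vol_{g(0)}(\Omega)/\Vol_{g(T)}(\Omega)$ can be made arbitrarily large as $T$ approaches the extinction time, while $n,r,T,B$ stay bounded. In the paper's applications the rescaled flows do carry an additional two-sided scalar curvature bound on the relevant region, which would close the gap, but the lemma as stated (and your reproduction of it) does not justify the asserted inequality $\Vol_{g(t)}(\Omega)\le e^{(n-1)BT}\Vol_{g(T)}(\Omega)$.
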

\begin{remark}
It is worth noticing that under rather general assumptions we can take $\tilde{V}=C(n,B)r^{\frac{n}{\alpha}}$ by the results of Zhang \cite{zhaq}, and Chen and Wang \cite{che}.
\end{remark}
An easy application of H\"older's inequality both in space and time yields the following inequality.
\begin{equation}
\int_0^T{\int_M{fg d\mu} dt}\le \mix{f}{\alpha,M}{\beta,[0,T)} \mix{g}{\alpha',M}{\beta',[0,T)},
\end{equation}
where $\alpha'=\alpha/(\alpha-1)$ and $\beta'=\beta/(\beta-1)$ are the H\"older conjugate exponents of $\alpha$ and $\beta$ respectively.
One can use this inequality to produce an interpolation inequality, which we recall.
\begin{proposition}\label{interpolationtheorem}
Let $1\le p\le q \le r<+\infty$, $1\le P\le Q\le R<+\infty$. Suppose that, once written $q=ap+(1-a)r$ and $Q=bP+(1-b)R$ for some $a,b \in[0,1]$, we have the following equations
\begin{equation}
\frac{ap}{q}=\frac{bP}{Q} \eqqcolon s_1 \ \ \text{and} \ \ \frac{(1-a)r}{q}=\frac{(1-b)R}{Q} \eqqcolon s_2.
\end{equation}
Then for every measurable function $v$ we have $\mix{v}{q,M}{Q,[0,T)}\le \mix{v}{p,M}{P,[0,T)}^{s_1} \mix{v}{r,M}{R,[0,T)}^{s_2}$.
\end{proposition}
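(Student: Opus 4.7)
The plan is to prove the interpolation inequality by applying Hölder's inequality twice, once in the spatial variable and once in the time variable, with the exponents chosen so that the four parameters $p, r, P, R$ appear in the right places.

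First I would interpolate in space. For a fixed $t$, write $|v|^q = |v|^{ap} \cdot |v|^{(1-a)r}$ and apply Hölder with conjugate exponents $1/a$ and $1/(1-a)$. This yields
\begin{equation}
\int_M |v|^q \, d\mu_{g(t)} \le \left(\int_M |v|^p \, d\mu_{g(t)}\right)^{a} \left(\int_M |v|^r \, d\mu_{g(t)}\right)^{1-a}.
\end{equation}
Raising to the power $Q/q$ and writing $A(t) \coloneqq \|v(\cdot,t)\|_{p,M}$ and $B(t) \coloneqq \|v(\cdot,t)\|_{r,M}$, this becomes
\begin{equation}
\left(\int_M |v|^q \, d\mu_{g(t)}\right)^{Q/q} \le A(t)^{apQ/q} B(t)^{(1-a)rQ/q} = A(t)^{Qs_1} B(t)^{Qs_2},
\end{equation}
using the defining relations $s_1 = ap/q$ and $s_2 = (1-a)r/q$.

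Next I would interpolate in time. Integrating the previous inequality over $[0,T)$ and applying Hölder with exponents $1/b$ and $1/(1-b)$ gives
\begin{equation}
\int_0^T A(t)^{Qs_1} B(t)^{Qs_2} \, dt \le \left(\int_0^T A(t)^{Qs_1/b} dt\right)^{b} \left(\int_0^T B(t)^{Qs_2/(1-b)} dt\right)^{1-b}.
\end{equation}
The whole point of the assumption $s_1 = bP/Q$ and $s_2 = (1-b)R/Q$ is precisely to make $Qs_1/b = P$ and $Qs_2/(1-b) = R$, so the right-hand side equals $\mix{v}{p,M}{P,[0,T)}^{Pb} \cdot \mix{v}{r,M}{R,[0,T)}^{R(1-b)}$. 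Taking the $Q$-th root and again using $Pb/Q = s_1$, $R(1-b)/Q = s_2$ produces the claimed inequality.

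The only consistency check worth flagging is that the two Hölder applications are legitimate, i.e. that $s_1 + s_2 = 1$ in both of its defining forms; this follows immediately from $q = ap + (1-a)r$ and $Q = bP + (1-b)R$, so no genuine obstacle appears. The entire argument is bookkeeping of exponents, and the assumptions of the proposition are essentially the minimal compatibility conditions forcing this bookkeeping to close.
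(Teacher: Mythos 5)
Your proof is correct. The spatial Hölder step with exponents $1/a$ and $1/(1-a)$ converts the inner integral into $A(t)^{Qs_1}B(t)^{Qs_2}$, and the temporal Hölder step with exponents $1/b$ and $1/(1-b)$, together with the hypotheses $s_1=ap/q=bP/Q$ and $s_2=(1-a)r/q=(1-b)R/Q$, gives exactly $Qs_1/b=P$ and $Qs_2/(1-b)=R$ and the final exponents $Pb/Q=s_1$, $R(1-b)/Q=s_2$; the bookkeeping closes. For the record, the paper does not actually prove Proposition~\ref{interpolationtheorem}: it merely states it after remarking that it follows from H\"older's inequality in space and time, which is precisely the route you took, so there is no alternative argument to compare against. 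One small correction of phrasing: what licenses the two H\"older applications is the conjugacy $a+(1-a)=1$ and $b+(1-b)=1$, not the identity $s_1+s_2=1$; the latter is a downstream consequence of the decompositions $q=ap+(1-a)r$ and $Q=bP+(1-b)R$ and serves only as a consistency check, not as the hypothesis of H\"older.
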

In Moser's iteration argument we will be interested in extrapolating an optimal couple of exponents given a super-optimal one and the couple $(1,1)$, in the conjugate H\"older exponents plane. The following lemma characterizes such a couple.
\begin{lemma}\label{extrapolationlemma}
For any strictly super-optimal couple $(a,b)$, i.e. such that $a>\frac{n}{2}\frac{b}{b-1}$, there exists a unique optimal couple $(\alpha_*,\beta_*)$ whose image via the conjugate H\"older mapping is extrapolated from $(1,1)$ and $(a',b')$, that is we have
\begin{equation}
\mix{w}{a',M}{b',[0,T)}\le \mix{w}{\alpha_*',M}{\beta_*',[0,T)}^{s_1} \mix{w}{1,M}{1,[0,T)}^{s_2},
\end{equation}
for every measurable function $w$. Moreover, we have
\begin{equation}
\frac{s_2}{1-s_1}=1 \ \ \text{and} \ \ \frac{s_1}{1-s_1}= \frac{b'n(a'-1)+2a'(b'-1)}{2a'-b'n(a'-1)}.
\end{equation}
\begin{proof}
Using the interpolation result Proposition \ref{interpolationtheorem}, we are reduced to show the existence and uniqueness of solution $(\alpha',\beta',\theta,\zeta)$ to the following equations for the exponents:
\begin{equation}
\begin{aligned}
a'=\theta \alpha'+(1-\theta)\cdot 1, \ b'=\zeta \beta'+(1-\zeta)\cdot 1, \ \alpha'=\frac{n\beta'}{n\beta'-2}\\
\frac{\theta\alpha'}{a'}=\frac{\zeta\beta'}{b'}=s_1, \ \frac{(1-\theta)\cdot 1}{a'}=\frac{(1-\zeta)\cdot 1}{b'}=s_2.
\end{aligned}
\end{equation}
The two equations in the second line are easily shown to be equivalent given the others in the first line. Solving the system by simple substitutions, we get
\begin{equation}
\theta=\frac{-2a'+2b'-b'n+a'b'n}{2b'}.
\end{equation}
This $\theta$ is greater than zero because of $a'>1$, and smaller than $1$ because $(a,b)$ is strictly super-optimal. Moreover, $\theta$ determines uniquely the solution to the system, and we have
\begin{equation}
\begin{aligned}
(\alpha^*)'=\frac{b'n(a'-1)+2a'(b'-1)}{b'n(a'-1)-2a'+2b'}, \ (\beta^*)'=\frac{b'n(a'-1)+2a'(b'-1)}{b'n(a'-1)},
\end{aligned}
\end{equation}
from which we deduce
\begin{equation}
\begin{aligned}
s_1=\frac{b'n(a'-1)+2a'(b'-1)}{2a'b'}, \ s_2=\frac{2a'+b'n-a'b'n}{2a'b'} \in(0,1),
\end{aligned}
\end{equation}
and hence the claim.
\end{proof}
\end{lemma}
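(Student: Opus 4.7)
My approach is to invoke Proposition \ref{interpolationtheorem} with lower endpoint $(p,P) = (1,1)$ and upper endpoint $(r,R) = (\alpha_*', \beta_*')$, where $(\alpha_*, \beta_*)$ is to be determined subject to being optimal. After passing to conjugate exponents, the optimality condition rewrites as $\alpha_*' = \frac{n\beta_*'}{n\beta_*' - 2}$, and this combined with the matching conditions of Proposition \ref{interpolationtheorem} gives the system
\begin{equation*}
a' = \theta\alpha_*' + (1-\theta), \quad b' = \zeta\beta_*' + (1-\zeta), \quad \frac{\theta\alpha_*'}{a'} = \frac{\zeta\beta_*'}{b'}, \quad \frac{1-\theta}{a'} = \frac{1-\zeta}{b'}
\end{equation*}
in the four unknowns $(\alpha_*', \beta_*', \theta, \zeta)$, supplemented by the optimality relation.

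Two preliminary observations simplify the problem. First, by construction $s_1 = \theta\alpha_*'/a'$ and $s_2 = (1-\theta)/a'$, so the convex-combination identity $a' = \theta\alpha_*' + (1-\theta)$ forces $s_1 + s_2 = 1$, which yields the identity $s_2/(1 - s_1) = 1$ for free. Second, the same convex-combination relations imply that the two proportion equations are equivalent, so the effective system reduces to three independent equations in four unknowns, closed by the optimality condition. The plan is then to eliminate $\alpha_*'$ via optimality, use the convex-combination relations to express $\alpha_*'$ and $\beta_*'$ in terms of $\theta$ and $\zeta$, and finally resolve $\theta$ from the remaining proportion equation. After backsubstitution, $s_1/(1-s_1) = \theta\alpha_*'/(1-\theta)$ should match the claimed formula.

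The only step requiring genuine care is verifying the admissibility conditions $\theta, \zeta \in (0,1)$, which is also what produces the quantity $2a' - b'n(a'-1)$ in the denominator of the stated expression. Positivity of $\theta$ follows from the hypothesis that $(a,b)$ lies in $(1,\infty)\times(1,\infty)$ (so in particular $a' > 1$), while the upper bound $\theta < 1$ is equivalent, after elementary rearrangement, to $2a' > b'n(a'-1)$, which is precisely the reformulation of strict super-optimality after conjugating exponents (since $a/(a-1) > (n/2)b'$ is the same inequality). No further obstacle is expected: uniqueness of the resulting couple follows automatically from the deterministic elimination, and the inequality itself then follows by direct application of Proposition \ref{interpolationtheorem}.
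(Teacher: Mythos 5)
Your approach is essentially the paper's: you invoke Proposition \ref{interpolationtheorem} with the endpoint couples $(1,1)$ and $(\alpha_*',\beta_*')$, reduce to the same four-unknown system closed by the optimality relation $\alpha_*' = n\beta_*'/(n\beta_*'-2)$, observe the redundancy of the two proportion equations, and resolve $\theta$, checking $\theta\in(0,1)$ from $a'>1$ and strict super-optimality respectively. Your explicit observation that the convex-combination identity forces $s_1+s_2=1$, and hence $s_2/(1-s_1)=1$ immediately, is a cleaner route to that part of the claim than the paper's explicit formulas for $s_1,s_2$, but the substance is the same.

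One small slip: in the parenthetical justification of the inequality $2a' > b'n(a'-1)$, you write that it is ``the same inequality'' as $a/(a-1) > (n/2)b'$. Since $a/(a-1)=a'$, that would read $a' > (n/2)b'$, which is \emph{not} strict super-optimality. The correct chain is $2a' > b'n(a'-1) \iff 2a'/(a'-1) > b'n \iff 2a > b'n \iff a > \tfrac{n}{2}b' = \tfrac{n}{2}\tfrac{b}{b-1}$, which is strict super-optimality. The slip does not affect the argument, since your stated conclusion is the right one, but the intermediate identification should be corrected.

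Finally, note that you leave the elimination of $\theta$ and the explicit formulas for $(\alpha^*)'$, $(\beta^*)'$, $s_1$ as ``plan'' rather than carrying them out. The paper does write these out; since the downstream proofs (Lemma \ref{supermoser}, Proposition \ref{betterintegrability}) use the explicit value of $s_1/(1-s_1)$ and the optimal couple $(\alpha_*,\beta_*)$, you would need to complete the substitution for the lemma to be usable as stated.
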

We conclude this subsection recalling Theorem $4.1$ in Wang's paper \cite{wan}; it regards the existence of a uniform Sobolev constant in a parabolic region, which will play a key role in the argument used in showing Theorem \ref{extensiontheorem2}.
\begin{definition}
We say that a subset $N \subset M$ admits a \textit{uniform Sobolev constant} $\sigma$ at each time slice if
\begin{equation}
\bigg( \int_N{|v|^\frac{2n}{n-2} d\mu_{g(t)}}\bigg)^{\frac{n-2}{n}}\le \sigma \int_N{|\nabla v|^2_{g(t)} d\mu_{g(t)}},
\end{equation}
for every function $v\in W^{1,2}_0(N)$ and $t \in [0,T]$.
\end{definition}
\begin{theorem}[Theorem $4.1$ in \cite{wan}]\label{uniformsobolevth}
Suppose $(M,g(t))$ is a complete Ricci flow defined on $[0,1]$. Fix a point $p\in M$ and suppose that
\begin{itemize}
\item $\Ric(x,t)\ge-(n-1)g(t)$ for every $(x,t)\in M\times [0,1]$;
\item $\Ric(x,t) \le (n-1)g(t)$ for every $(x,t)\in B_{g(1)}(p,1) \times [0,1]$;
\item there exists a constant $\kappa$ such that $\Vol_{g(1)}(B_{g(1)}(p,1))\ge \kappa$.
\end{itemize}
Then there exist a radius $r=r(n,\kappa)$ and a uniform Sobolev constant $\sigma=\sigma(n,\kappa)$ for $B_{g(1)}(p,r(n,\kappa))$ on the time interval $[0,1]$.
\end{theorem}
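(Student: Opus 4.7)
The plan is to reduce the uniform Sobolev inequality to a single-time inequality by showing that all the metrics $g(t)$ are comparable to $g(1)$ on a fixed ball, and then applying a classical Sobolev estimate on $(M, g(1))$. Concretely, I would pick the smaller radius $r(n,\kappa)$ so that the ball $B_{g(1)}(p, r(n,\kappa))$ remains inside the region where the two-sided Ricci bound is available, and then prove three statements: (a) pointwise metric equivalence $g(t) \sim g(1)$ on that ball, (b) a Sobolev inequality at time $1$ on that ball with constant depending only on $n$ and $\kappa$, and (c) the transfer of the Sobolev inequality to other times via (a).

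For step (a), on $B_{g(1)}(p,1)$ the flow equation $\partial_t g = -2\Ric$ together with $|\Ric|_{g(t)} \le (n-1) g(t)$ yields $\bigl|\partial_t \log g(t)\bigr| \le 2(n-1)$, so integrating from $t$ to $1$ gives $e^{-2(n-1)(1-t)} g(1) \le g(t) \le e^{2(n-1)(1-t)} g(1)$ pointwise on the ball. This immediately implies uniform two-sided comparison of volume elements $d\mu_{g(t)}$ and of gradient norms $|\nabla v|_{g(t)}^2$ with their time-$1$ counterparts, with constants depending only on $n$. In particular, the distance functions $d_{g(t)}$ and $d_{g(1)}$ differ at most by a factor $e^{n-1}$, so by shrinking $r(n,\kappa)$ I can ensure that the ball $B_{g(1)}(p, r(n,\kappa))$ is contained in $B_{g(t)}(p, 1)$ (where the Ricci upper bound holds) for every $t \in [0,1]$, making the estimate internally consistent.

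For step (b), I use the global lower bound $\Ric_{g(1)} \ge -(n-1)g(1)$ together with the non-collapsed volume $\Vol_{g(1)}(B_{g(1)}(p,1)) \ge \kappa$. Bishop–Gromov propagates the volume non-collapsing down to every smaller scale, and combined with the two-sided Ricci bound on the unit ball this yields a uniform positive lower bound on the injectivity radius at $p$ for $g(1)$, via a Cheeger–Gromov–Taylor type estimate; a classical result (Hebey–Vaugon, or equivalently Croke's isoperimetric inequality) then provides an $L^{2n/(n-2)}$-Sobolev inequality on $B_{g(1)}(p, r(n,\kappa))$ with constant $\sigma_0(n,\kappa)$. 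Combining this with the metric equivalence from step (a) transfers the inequality to every $g(t)$ at the cost of an $n$-dependent multiplicative factor, giving a uniform constant $\sigma(n,\kappa)$ as desired. The main obstacle is step (b): one has only a Ricci lower bound and local volume non-collapsing, so producing an injectivity radius bound (or directly a Sobolev inequality) requires the delicate classical machinery of bounded-geometry Sobolev estimates rather than any soft argument from the Ricci flow itself.
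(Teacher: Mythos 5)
Your steps (a) and (c) are correct: the two-sided Ricci bound on $B_{g(1)}(p,1)\times[0,1]$ gives the pointwise metric comparability $e^{-2(n-1)}g(1)\le g(t)\le e^{2(n-1)}g(1)$ there, which transfers a time-$1$ Sobolev inequality to all times with only $n$-dependent loss. The gap is in step (b): you claim the two-sided Ricci bound plus the volume lower bound yields a lower bound on the injectivity radius at $p$ ``via a Cheeger--Gromov--Taylor type estimate.'' This is not valid, because the Cheeger--Gromov--Taylor estimate requires a two-sided bound on the \emph{sectional} curvature, and Ricci bounds do not control sectional curvature when $n\ge 4$. Worse, the intermediate statement is itself false in general: sequences of Ricci-flat metrics degenerating to an orbifold limit (e.g.\ on a $K3$ surface) satisfy $|\Ric|=0$ and have non-collapsed unit balls, yet the injectivity radius at points approaching the emerging orbifold singularity tends to zero. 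For any $\epsilon$-regularity-type injectivity estimate under Ricci bounds one would need in addition an $L^{n/2}$ curvature energy bound, which is not among the hypotheses.

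Fortunately the injectivity radius is not the right intermediate object. The Dirichlet Sobolev inequality on a small $g(1)$-ball is equivalent to a local isoperimetric inequality, and that already follows from the Ricci \emph{lower} bound together with the volume non-collapsing (no upper Ricci bound is needed at this stage): one uses Croke's isoperimetric inequality, whose constant depends on a ``visibility'' quantity, and bounds the visibility from below via Bishop--Gromov volume comparison and the hypothesis $\Vol_{g(1)}(B_{g(1)}(p,1))\ge\kappa$. This is precisely the content of Yang's local Sobolev estimate \cite{yan} (listed in the paper's bibliography), and it is what Wang invokes in his proof. The upper Ricci bound is used only for your step (a), not for the Sobolev constant at time $1$. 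So your overall plan is on the right track, but the unjustified injectivity radius step must be replaced by the Croke/Yang visibility argument.
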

A different method to develop Sobolev constant bounds along the Ricci flow was obtained by Zhang in \cite{zhaq1}, which had great impact on the study of bounded scalar curvature Ricci flows, for instance see \cite{bam,bam1,sim,sim1}.
\subsection{Proof of Theorem \ref{extensionth1}}
Our proof of Theorem \ref{extensionth1} follows now directly from a blow-up argument, exploiting the scaling behaviour of the norm considered in the statement.
\begin{proof}
If the couple $(\alpha,\beta)$ is super-optimal but not optimal, a straightforward application of H\"older's inequality in time gives the existence of an optimal couple $(\alpha^*,\beta^*)=(\alpha,\beta^*)$, where $\beta^*<\beta$, for which we have
\begin{equation}
\mix{\Rm}{\alpha,M}{\beta^*,[0,T)} \le T^{\frac{1}{(\beta^*)'}}\mix{\Rm}{\alpha,M}{\beta,[0,T)}<\infty,
\end{equation}
so it is sufficient to prove the claim in the optimal case.
Arguing by contradiction, if the flow is not extensible, Shi's Theorem implies that $|\Rm|$ is unbounded on $M \times [0,T)$. From the boundedness assumption for times smaller than $T$, we can pick a sequence of space-time points $(x_i,t_i)$ such that $t_i \nearrow T$, and for some constant $C$ greater than $1$ we obtain
\begin{equation}
|\Rm|(x_i,t_i)\ge C^{-1} \sup_{M\times [0,t_i]} |\Rm|(x,t).
\end{equation}
Set $Q_i \coloneqq |\Rm|(x_i,t_i) \rightarrow +\infty$ and $P_i \coloneqq B_{g(t_i)}(x_i,Q_i^{-\frac{1}{2}}) \times [t_i-Q_i^{-1},t_i]$. Clearly, $|\Rm| \le C Q_i$ on the region $M\times [t_i-Q_i^{-1},t_i]$. Consider a sequence of Ricci flows on $M \times [-Q_i t_i,0]$ defined as $g_i(t)\coloneqq Q_ig(Q_i^{-1}t+t_i)$.
We are in the hypothesis to apply Perelman's $\kappa-$noncollapsing theorem Theorem \ref{nonlocalcollapsingtheorem} for the parabolic region $P_i$, with any scale $\rho$ for $i$ large enough, which guarantees that the injectivity radii of the rescaled metrics $g_i$ at $(x_i,t_i)$ are uniformly bounded away from zero. Then by the compactness result in Theorem \ref{compactness} we can extract a subsequence converging in the pointed smooth Cheeger-Gromov sense to a complete Ricci flow $(M_\infty,g_{\infty}(t),x_{\infty})$ defined on $(-\infty,0]$, whose curvature is uniformly bounded by $C$ and such that $|\Rm_{g_{\infty}}|(x_{\infty},0)=1$.
On the other hand, if the couple $(\alpha,\beta)$ is optimal, we compute
\begin{equation}
\begin{aligned}
&\int_{-1}^0{ \bigg( \int_{B_{g_{\infty}(0)}(x_\infty,1)}{|\Rm_{g_{\infty}(t)}|^{\alpha} d\mu_{g_{\infty}(t)}}\bigg)^{\frac{\beta}{\alpha}} dt} \\
= \lim_{i \rightarrow \infty} &\int_{-1}^0{ \bigg( \int_{B_{g_i(0)}(\bar{x}_i,1)}{|\Rm_{g_i(t)}|^{\frac{n}{2} \frac{\beta}{\beta-1}} d\mu_{g_i(t)}}\bigg)^{\frac{2}{n}(\beta-1)} dt}\\
=\lim_{i \rightarrow \infty} &\int_{t_i-Q_i^{-1}}^{t_i}{ \bigg( \int_{B_{g(t_i)}(x_i,Q_i^{-\frac{1}{2}})}{|\Rm_{g(t)}|^{\frac{n}{2} \frac{\beta}{\beta-1}} Q_i^{\frac{n}{2}-\frac{n}{2} \frac{\beta}{\beta-1}} d\mu_{g(t)}}\bigg)^{\frac{2}{n}(\beta-1)} Q_idt}\\
\le \lim_{i \rightarrow \infty} &\int_{t_i-Q_i^{-1}}^{t_i}{ \bigg( \int_{M}{|\Rm_{g(t)}|^{\frac{n}{2} \frac{\beta}{\beta-1}} d\mu_{g(t)}}\bigg)^{\frac{2}{n}(\beta-1)} dt}=0,
\end{aligned}
\end{equation}
where the last step is justified by Lebesgue's dominated convergence theorem and the assumption $\mix{\Rm}{\alpha,M}{\beta,[0,T)}<\infty$. Since the limit flow $g_{\infty}(t)$ is smooth, this chain of inequalities implies that $\Rm_{g_{\infty}(t)}\equiv 0$ on the parabolic region $B_{g_{\infty}(0)}(x_{\infty},1)\times [-1,0]$, in particular, $\Rm_{\infty}(x_{\infty},0)=0$, a contradiction.
\end{proof}
\begin{remark}\label{endpoints}
It is interesting to analyse the "endpoints" case.

Firstly, consider $\alpha=\infty$ and $\beta=1$. Even in the closed case, Hamilton's theorem in \cite{ham1} guarantees that the sectional curvature blows up at the finite time singularity $T$, and a maximum principle argument yields $|\Rm| \ge \frac{1}{8(T-t)} \notin L^1$. Moreover, the boundedness of the $L^1$-norm of the maximum of the Ricci curvature is sufficient to extend the flow, as shown in \cite{wan} and subsequently in \cite{hef}.

In the case $\alpha=\frac{n}{2}$ and $\beta=\infty$, the Ricci flow of the standard sphere shows that one cannot expect to extend the flow even if $\mix{\Rm}{\frac{n}{2},S^n}{\infty,[0,T)}<+\infty$. Interestingly, an extension theorem is proven in \cite{ye} under a \textit{smallness assumption} on the $(n/2,\infty)$-mixed norm.
\end{remark}
\begin{corollary}\label{riemannclosedcase}
Let $(M,g(t))$ be a Ricci flow on a closed manifold $M$ of dimension $n$, defined on $[0,T)$, with $T<+\infty$. Assume the integral bound $\mix{\Rm}{\alpha,M}{\beta,[0,T)}<+\infty$ for some super-optimal couple $(\alpha,\beta)$.
Then the flow can be extended past the time $T$.
\end{corollary}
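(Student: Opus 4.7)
The plan is to deduce Corollary \ref{riemannclosedcase} directly from Theorem \ref{extensionth1} by verifying that, in the closed manifold setting, the additional geometric hypotheses of the theorem are automatically satisfied. There is essentially no new analytic work to do; the task is to confirm that closedness of $M$ gives us all the geometric control assumed in Theorem \ref{extensionth1} for free.

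First, I would note that a closed manifold is compact and hence metrically complete with respect to any smooth Riemannian metric, so completeness of $(M,g(t))$ for each $t \in [0,T)$ is immediate. Next, since $M$ is compact and $\Rm_{g(t)}$ is a continuous tensor field on $M$ for each $t$, the quantity $\sup_M |\Rm|_{g(t)}$ is finite for every $t \in [0,T)$; that is, the flow has bounded curvature on each time slice. Finally, because $(M,g(0))$ is a compact Riemannian manifold, its injectivity radius $\inj(M,g(0))$ is strictly positive.

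Thus every hypothesis of Theorem \ref{extensionth1} is met: we have a Ricci flow on a manifold of dimension $n$ defined on $[0,T)$ with $T<+\infty$, complete and of bounded curvature on each time slice, with positive initial injectivity radius, and satisfying $\mix{\Rm}{\alpha,M}{\beta,[0,T)}<+\infty$ for a super-optimal couple $(\alpha,\beta)$. Applying Theorem \ref{extensionth1} directly yields that the flow extends past $T$, which is the desired conclusion.

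There is no real obstacle here; the only thing worth a brief remark is the verification of bounded curvature at each time slice, which uses only the smoothness of $\Rm$ on the compact manifold $M$ and not any uniform-in-$t$ bound (that would be precisely what one is trying to exclude in the singularity analysis). Since Theorem \ref{extensionth1} only requires the curvature bound to hold at each fixed time $t \in [0,T)$, compactness is enough.
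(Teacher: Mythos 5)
Your proof is correct and takes exactly the approach the paper intends: the corollary is presented without a separate proof precisely because, as you verify, compactness of $M$ automatically gives completeness, finiteness of $\sup_M|\Rm|_{g(t)}$ at each time slice, and positivity of $\inj(M,g(0))$, so Theorem \ref{extensionth1} applies directly.
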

The following result generalizes Theorem $1.1$ in \cite{chm} to mixed norms along complete (possibly non-compact) Ricci flows. The proof strictly follows the one in \cite{chm}.
\begin{theorem}
Let $(M,g(t))$ be a Ricci flow on a manifold $M$ of dimension $n$, defined on $[0,T)$, $T<+\infty$, and such that $(M,g(t))$ is complete and has bounded curvature for every $t$ in $[0,T)$. Suppose that the initial slice $(M,g(0))$ satisfies $\inj(M,g(0))>0$. Assume the integral bounds $\mix{\Sc}{\alpha,M}{\beta,[0,T)}<+\infty$ and $\mix{\W}{\alpha,M}{\beta,[0,T)}<+\infty$ for some super-optimal couple $(\alpha,\beta)$.
Then the flow can be extended past the time $T$.
\begin{proof}
Arguing by contradiction, if the flow is not extensible, Shi's Theorem implies that $|\Rm|$ is unbounded on $M \times [0,T)$. From the boundedness assumption for times smaller than $T$, we can pick a sequence of space-time points $(x_i,t_i)$ such that $t_i \nearrow T$, and for some constant $C$ greater than $1$ we have
\begin{equation}
|\Rm|(x_i,t_i)\ge C^{-1} \sup_{M\times [0,t_i]} |\Rm|(x,t).
\end{equation}
Set $Q_i \coloneqq |\Rm|(x_i,t_i) \rightarrow +\infty$ and $P_i \coloneqq B_{g(t_i)}(x_i,Q_i^{-\frac{1}{2}}) \times [t_i-Q_i^{-1},t_i]$. Clearly, $|\Rm| \le C Q_i$ on the region $M\times [t_i-Q_i^{-1},t_i]$. Consider a sequence of Ricci flows on $M \times [-Q_i t_i,0]$ defined as $g_i(t)\coloneqq Q_ig(Q_i^{-1}t+t_i)$.
We can argue as in the proof of Theorem \ref{extensionth1} to extract a subsequence converging in the pointed smooth Cheeger-Gromov sense to a complete Ricci flow $(M_\infty,g_{\infty}(t),x_{\infty})$ defined on $(-\infty,0]$, whose curvature is uniformly bounded by $C$ and such that $|\Rm_{g_{\infty}}|(x_{\infty},0)=1$.
Again, if the couple $(\alpha,\beta)$ is optimal, the scaling properties of $\Sc$ and $\W$ and the finiteness of their mixed integral norms give us
\begin{equation}
\int_{-1}^0{ \bigg( \int_{B_{g_{\infty}(0)}(x_\infty,1)}{|\Sc_{g_{\infty}(t)}|^{\alpha} d\mu_{g_{\infty}(t)}}\bigg)^{\frac{\beta}{\alpha}} dt}=0,
\end{equation}
and
\begin{equation}
\int_{-1}^0{ \bigg( \int_{B_{g_{\infty}(0)}(x_\infty,1)}{|\W_{g_{\infty}(t)}|^{\alpha} d\mu_{g_{\infty}(t)}}\bigg)^{\frac{\beta}{\alpha}} dt}=0.
\end{equation}
Once more we deduce from the smoothness of the limit flow $g_{\infty}(t)$, together with these equations, that $\Sc_{g_{\infty}(t)}\equiv 0$, thus also $\Ric_{g_{\infty}(t)}\equiv 0$ from the evolution equation of the scalar curvature, and $\W_{g_{\infty}(t)}\equiv 0$ on the parabolic region $B_{g_{\infty}(0)}(x_{\infty},1)\times [-1,0]$, from which we deduce $\Rm_{g_{\infty}(t)}\equiv 0$ through (\ref{rmdecomposition}); in particular, $\Rm_{g_{\infty}}(x_{\infty},0)=0$, a contradiction. We argue exactly as in the proof of Theorem \ref{extensionth1} in the case $(\alpha,\beta)$ is super-optimal but not optimal. 
\end{proof}
\end{theorem}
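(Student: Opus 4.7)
The plan is to follow the same blow-up scheme as in the proof of Theorem \ref{extensionth1}, with the crucial change that the vanishing statement we extract from the integral hypothesis in the limit is \emph{not} directly about $\Rm$, but separately about $\Sc$ and $\W$; these two vanishings then have to be combined with the evolution equation of the scalar curvature and the decomposition \eqref{rmdecomposition} to recover $\Rm_{\infty}\equiv 0$ and produce the desired contradiction.

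First, exactly as in the proof of Theorem \ref{extensionth1}, I would reduce to the case where $(\alpha,\beta)$ is optimal: for a strictly super-optimal $(\alpha,\beta)$, H\"older's inequality in time applied to $|\Sc|$ and $|\W|$ separately produces an optimal couple $(\alpha,\beta^{*})$ with $\beta^{*}<\beta$ and still finite mixed norms, at the cost of an extra $T^{1/(\beta^{*})'}$ factor. Next, assuming by contradiction that the flow is not extensible, Shi's theorem forces $|\Rm|$ to be unbounded on $M\times[0,T)$, so one can pick a sequence of "almost maximum" points $(x_i,t_i)$ with $t_i\nearrow T$, $Q_i:=|\Rm|(x_i,t_i)\to\infty$ and $|\Rm|\le CQ_i$ on $M\times[0,t_i]$. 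Rescaling by $g_i(t):=Q_ig(Q_i^{-1}t+t_i)$ and combining Perelman's non-local-collapsing (Theorem \ref{nonlocalcollapsingtheorem}) with Topping's compactness (Theorem \ref{compactness}) yields a pointed Cheeger-Gromov limit $(M_{\infty},g_{\infty}(t),x_{\infty})$ defined on $(-\infty,0]$, of bounded curvature, with $|\Rm_{g_{\infty}}|(x_{\infty},0)=1$.

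The scaling behavior of the optimal mixed norm then comes into play. Since the optimality relation $\alpha=\tfrac{n}{2}\tfrac{\beta}{\beta-1}$ was designed so that $\mix{\Sc}{\alpha,\cdot}{\beta,\cdot}$ and $\mix{\W}{\alpha,\cdot}{\beta,\cdot}$ are invariant under the parabolic rescaling of the Ricci flow, I would repeat verbatim the change-of-variables computation at the end of the proof of Theorem \ref{extensionth1}, applied separately to $\Sc$ and to $\W$, using the finiteness of the two mixed norms together with Lebesgue's dominated convergence theorem. This gives
\begin{equation}
\int_{-1}^{0}\!\!\bigg(\int_{B_{g_{\infty}(0)}(x_{\infty},1)}\!\!|\Sc_{g_{\infty}(t)}|^{\alpha}\,d\mu_{g_{\infty}(t)}\bigg)^{\!\beta/\alpha}\!dt=0
\end{equation}
and the identical statement for $\W_{g_{\infty}}$; by smoothness of the limit, $\Sc_{g_{\infty}}\equiv 0$ and $\W_{g_{\infty}}\equiv 0$ on the parabolic neighborhood $B_{g_{\infty}(0)}(x_{\infty},1)\times[-1,0]$.

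The final step is the one that has no counterpart in the proof of Theorem \ref{extensionth1}. From the evolution equation $\partial_{t}\Sc=\Delta\Sc+2|\Ric|^{2}$, the identical vanishing of $\Sc_{g_{\infty}}$ on the parabolic neighborhood forces $|\Ric_{g_{\infty}}|\equiv 0$ there; feeding $\Sc_{\infty}=0$, $\Ric_{\infty}=0$ and $\W_{\infty}=0$ into \eqref{rmdecomposition} then yields $\Rm_{g_{\infty}}\equiv 0$ on $B_{g_{\infty}(0)}(x_{\infty},1)\times[-1,0]$, contradicting $|\Rm_{g_{\infty}}|(x_{\infty},0)=1$. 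The main obstacle I foresee is not conceptual but rather the bookkeeping needed to verify that the exponents produced by the parabolic rescaling of $d\mu_{g(t)}\,dt$ cancel \emph{exactly} when $(\alpha,\beta)$ is optimal, which is precisely the reason the optimality relation was introduced; once that is checked, the rest of the argument is a direct adaptation of the scalar arguments of Wang and Ma-Cheng combined with the algebraic decomposition \eqref{rmdecomposition}.
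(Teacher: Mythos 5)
Your proposal follows the paper's proof essentially step for step: the same reduction from super-optimal to optimal via H\"older in time, the same blow-up at almost-maximal Riemann curvature points with Perelman non-collapsing and Topping compactness, the same scaling-invariance computation showing that the optimal mixed norms of $\Sc$ and $\W$ vanish on the limit, and the same final step using $\partial_t \Sc = \Delta \Sc + 2|\Ric|^2$ together with the decomposition \eqref{rmdecomposition} to conclude $\Rm_{g_\infty}\equiv 0$. The argument is correct and matches the paper's in both structure and substance.
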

\section{Parabolic Moser Iteration and Proof of Theorem \ref{extensiontheorem2}}\label{section 2}
In this section we prove Theorem \ref{extensiontheorem2}. The idea of the proof is similar to the one of Theorem \ref{extensionth1} in the previous section, but this time we will rescale with scalar curvature rather than Riemannian curvature. Consequently, we do not have the full curvature bounds needed to extract a smooth limit flow. Hence we will need to prove the necessary estimates for elements of the sequence of rescaled flows rather than for the limit. For this reason, we develop a Moser iteration along the Ricci flow in order to obtain a Moser-Harnack type inequality. This is the main technical part of this article. The main method of the proof resembles the one in Wang's paper \cite{wan}; however, several modifications are necessary. We first settle the super-optimal case (Theorem \ref{supermoser}) and then we prove the optimal case (Theorem \ref{optimalmoser}) with the help of a better integrability result (Theorem \ref{betterintegrability}). The main difficulty to overcome - which is not present in Wang's case - is given by the possibly low temporal integrability case, when $\beta<\frac{n}{2}$. We will deal with this constructing an iterative scheme of reverse H\"older inequalities.
\subsection{Moser Iteration in the Super-optimal Case}
Throughout this section, we consider a fixed complete Ricci flow $(M,g(t))$ on a $n-$dimensional manifold $M$, with $n\ge 3$, defined on $[0,T]$.
\begin{definition}
For any given point $p \in M$ and radius $r>0$, we define the sets
\begin{equation}
\begin{aligned}
\Omega \coloneqq B_{g(T)}(p,r),\ \ \  \Omega' \coloneqq B_{g(T)}\Big(p,\tfrac{r}{2} \Big),\\
D \coloneqq \Omega \times [0,T],\ \ \ D' \coloneqq \Omega' \times \Big[\tfrac{T}{2},T \Big].
\end{aligned}
\end{equation}
\end{definition}
From now on we suppose to have a uniform Sobolev constant for the domain $\Omega$, and also the bound $0<\Vol_{g(t)}(\Omega')<+\infty$  for $t \in [\frac{T}{2},T]$. We have the following analogue of Property $4.1$ in Wang's paper \cite{wan}.
\begin{lemma}
Under the above assumptions, consider a function $v \in C^1(D)$ with $v(\cdot,t) \in C^1_0(\Omega)$ for every $t \in [0,T]$. Then for any $(\alpha,\beta)$ optimal couple we have
\begin{equation}
\mix{v^2}{\alpha',\Omega}{\beta',[0,1]} \le \sigma^{\frac{1}{\beta'}} \max_{t\in [0,1]}{\norm{v(\cdot,t)}_{2,\Omega}^{s}} \bigg( \int_D{|\nabla v|^2 d\mu}\bigg)^{\frac{1}{\beta'}}=\sigma^{\frac{1}{\beta'}} \mix{v}{2,\Omega}{\infty,[0,1]}^{s} \norm{\nabla v}_{2,D}^{\frac{2}{\beta'}},
\label{property}
\end{equation}
where $s \coloneqq \frac{\alpha'(2-n)+n}{\alpha'} \in (0,2)$.
\begin{proof}
For the convenience of the reader, we remark that
\begin{equation}
\alpha'=\frac{\alpha}{\alpha-1}=\frac{n\beta'}{n\beta'-2},\ \ \ \beta'=\frac{\beta}{\beta-1}=\frac{2\alpha'}{n(\alpha'-1)}.
\end{equation}
Moreover, if we set $a \coloneqq n(\alpha'-1)$, $b\coloneqq \alpha'(2-n)+n$, $p\coloneqq \frac{2}{(n-2)(\alpha'-1)}$ and $q\coloneqq p'= \frac{2}{\alpha'(2-n)+n}$, we have
\begin{equation}
\begin{aligned}
a+b=2\alpha', \ \ ap=\frac{2n}{n-2}=2^*, \ \ bq=2, \ \ \frac{1}{p}+\frac{1}{q}=1, \ \ \frac{a\beta'}{2\alpha'}=1,\ \ \frac{2}{q \alpha'}=s.
\end{aligned}
\end{equation}
Therefore, we compute using subsequently H\"older and Sobolev inequalities
\begin{align}
\mix{v^2}{\alpha',\Omega}{\beta',[0,T]}=\bigg( \int_0^T{ \bigg( \int_\Omega {|v|^{2\alpha'} d\mu}\bigg)^{\frac{\beta'}{\alpha'}} dt} \bigg)^{\frac{1}{\beta'}}=\bigg( \int_0^T{ \bigg( \int_\Omega {|v|^{a+b} d\mu}\bigg)^{\frac{\beta'}{\alpha'}} dt} \bigg)^{\frac{1}{\beta'}}\\
\le \bigg( \int_0^T{ \bigg( \int_\Omega {|v|^{ap} d\mu}\bigg)^{\frac{1}{p} \frac{\beta'}{\alpha'}} \bigg( \int_\Omega {|v|^{bq} d\mu}\bigg)^{\frac{1}{q} \frac{\beta'}{\alpha'}} dt} \bigg)^{\frac{1}{\beta'}}=\bigg( \int_0^T{ \bigg( \int_\Omega {|v|^{2^*} d\mu}\bigg)^{\frac{a\beta'}{2^* \alpha'}} \bigg( \int_\Omega {|v|^2 d\mu}\bigg)^{\frac{\beta'}{q\alpha'}} dt} \bigg)^{\frac{1}{\beta'}} \nonumber \\
\le \bigg( \int_0^T{ \bigg( \sigma \int_\Omega {|\nabla v|^2 d\mu}\bigg)^{\frac{a\beta'}{2\alpha'}} \bigg( \int_\Omega {|v|^2 d\mu}\bigg)^{\frac{\beta'}{q \alpha'}} dt} \bigg)^{\frac{1}{\beta'}}=\bigg( \int_0^T{ \bigg( \sigma \int_\Omega {|\nabla v|^2 d\mu}\bigg) \bigg( \int_\Omega {|v|^2 d\mu}\bigg)^{\frac{\beta'}{q \alpha'}} dt} \bigg)^{\frac{1}{\beta'}} \nonumber \\
\le \sigma^{\frac{1}{\beta'}} \bigg( \int_D{|\nabla v|^2 d\mu dt} \bigg)^{\frac{1}{\beta'}} \cdot \max_{t\in[0,T]} \norm{v(\cdot,t)}_{2,\Omega}^{\frac{\beta'}{q \alpha'} \frac{2}{\beta'}}=\sigma^{\frac{1}{\beta'}} \bigg( \int_D{|\nabla v|^2 d\mu dt} \bigg)^{\frac{1}{\beta'}} \cdot \max_{t\in[0,T]} \norm{v(\cdot,t)}^s_{2,\Omega}.\nonumber
\end{align}
\end{proof}
\end{lemma}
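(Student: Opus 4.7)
The plan is the standard ``Sobolev + Hölder in space, then $L^\infty$ in time'' trick, tuned so that the exponents work out exactly because $(\alpha,\beta)$ is optimal. First I would verify the elementary exponent identities that pin down the proof: from $\alpha'=n\beta'/(n\beta'-2)$ and the definitions $a\coloneqq n(\alpha'-1)$, $b\coloneqq \alpha'(2-n)+n$, $p\coloneqq 2/((n-2)(\alpha'-1))$, $q\coloneqq p'$, one checks that $a+b=2\alpha'$, $ap=2^*=2n/(n-2)$, $bq=2$, and crucially $a\beta'/(2\alpha')=1$ and $2/(q\alpha')=s$. These are exactly the identities tabulated in the statement, so this is just bookkeeping.

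With the identities in hand, I would write $|v|^{2\alpha'}=|v|^{a}\,|v|^{b}$ and apply Hölder's inequality in space with conjugate exponents $(p,q)$, turning the inner spatial integral $\int_{\Omega}|v|^{2\alpha'}d\mu$ into $\bigl(\int_{\Omega}|v|^{2^{*}}d\mu\bigr)^{1/p}\bigl(\int_{\Omega}|v|^{2}d\mu\bigr)^{1/q}$. Raising to the $\beta'/\alpha'$ power and using the Sobolev embedding hypothesis on $\Omega$ replaces the $L^{2^{*}}$ factor by $\sigma\int_{\Omega}|\nabla v|^{2}d\mu$, and the identity $a\beta'/(2\alpha')=1$ ensures that this Dirichlet integral appears to the first power inside the outer time integral, with no awkward leftover exponent.

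At that stage the estimate reads
\begin{equation*}
\mix{v^{2}}{\alpha',\Omega}{\beta',[0,T]}\le \sigma^{1/\beta'}\Bigl(\int_{0}^{T}\!\!\int_{\Omega}|\nabla v|^{2}d\mu\,\Bigl(\int_{\Omega}|v|^{2}d\mu\Bigr)^{\beta'/(q\alpha')}dt\Bigr)^{1/\beta'}.
\end{equation*}
I would then pull the $L^{2}(\Omega)$-factor out of the time integral by estimating it by its temporal maximum; the exponent that comes out is $\beta'/(q\alpha')\cdot 2/\beta'=2/(q\alpha')=s$, matching the claimed exponent. What remains is the clean $\int_{D}|\nabla v|^{2}d\mu\,dt$ raised to $1/\beta'$, which gives exactly \eqref{property}. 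Finally, to see $s\in(0,2)$ I would note that $s=2-n(\alpha'-1)/\alpha'$ and that optimality forces $\alpha'\in(1,n/(n-2))$ (equivalently $\alpha>n/2$), so the numerator $\alpha'(2-n)+n$ is positive and strictly less than $2\alpha'$.

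There is no genuine obstacle here; the only place one can slip is in matching up the bookkeeping of $a,b,p,q$ with $\alpha'$ and $\beta'$ so that the Sobolev factor appears with exponent $1$ (not some fractional power that would ruin the $L^{1}$-in-time structure). Once the identity $a\beta'/(2\alpha')=1$ is noted, the rest is routine application of Hölder and Sobolev.
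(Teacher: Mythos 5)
Your proposal is correct and follows essentially the same route as the paper: the same splitting $|v|^{2\alpha'}=|v|^a|v|^b$, the same Hölder pair $(p,q)$ in space, the same use of the Sobolev constant to convert the $L^{2^*}$ factor into the Dirichlet energy with the identity $a\beta'/(2\alpha')=1$ making the Dirichlet integral appear to the first power, and then the sup-in-time bound on the $L^2$ factor. Your short verification that $s\in(0,2)$ via $\alpha'\in(1,n/(n-2))$ is a welcome small addition the paper leaves implicit, but it does not constitute a different argument.
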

We start showing a Moser-Harnack inequality for super-optimal couples.
\begin{lemma}\label{supermoser}
Given $(M,g(t))$ as above, suppose there exists a constant $B \ge 0$ such that on $D$ we have $\Ric(x,t) \ge -Bg(t)$ and let $(a,b)$ be a strictly super-optimal couple. Assume that for two measurable functions $f$ and $h$ there exists a non-negative function $u \in C^\infty(D)$ satisfying
\begin{equation}
\frac{\partial u}{\partial t} \le \Delta u+fu+h
\label{equation1}
\end{equation}
in the sense of distributions, where $\mix{f}{a,\Omega}{b,[0,T]}+\mix{\Sc_{-}}{a,\Omega}{b,[0,T]}+1 \le C_0$. Then there exists a constant $C=C(n,a,b,\sigma,C_0,r,T,B)$ such that
\begin{equation}\label{equation2}
\norm{u}_{\infty,D'} \le C(\mix{u}{\alpha',\Omega}{\beta',[0,T]}+\mix{h}{a,\Omega}{b,[0,T]}\cdot \mix{1}{\alpha',\Omega}{\beta',[0,T]}),
\end{equation}
where $\alpha=\alpha_*(a,b,n)$ and $\beta=\beta_*(a,b,n)$ are the optimal integrability exponents given by Lemma \ref{extrapolationlemma}.
\begin{proof}
Consider a cut-off function $\eta \in C^{\infty}(D)$ such that $\eta(\cdot,t)\in C^{\infty}_0(\Omega)$ for every $t \in [0,T]$, $\eta(x,0)\equiv 0$ and $\eta(x,\cdot)$ is a non-decreasing function for every $x \in \Omega$. Set $\kappa \coloneqq \mix{h}{a,\Omega}{b,[0,T]}$ and $v\coloneqq u+\kappa$. Rewriting (\ref{equation1}) in terms of $v$ we simply have
\begin{equation}
\frac{\partial v}{\partial t} -\Delta v \le f(v-\kappa)+h.
\end{equation}
For a fixed $\lambda>1$, it makes sense to consider $\eta^2(u+\kappa)^{\lambda-1}$ as a test function, so we get for any $s \in (0,T]$
\begin{equation}
\begin{aligned}
\int_0^s \int_\Omega{(-\Delta v)\eta^2 v^{\lambda-1} d\mu dt}+\int_0^s \int_\Omega{\frac{\partial v}{\partial t}\eta^2 v^{\lambda-1} d\mu dt}&\le \int_0^s \int_\Omega{(fu+h)\eta^2 (u+\kappa)^{\lambda-1} d\mu dt} \\
&\le \int_0^s \int_\Omega{\Big(|f|+\frac{|h|}{\kappa}\Big)\eta^2 v^{\lambda} d\mu dt}.
\end{aligned}
\end{equation}
Using the equation for the volume element under the Ricci flow and integrating by parts, we deduce
\begin{equation}
\begin{aligned}
 \int_0^s& \int_\Omega{(2\eta \scal{\nabla \eta}{\nabla v} v^{\lambda-1} +(\lambda-1)\eta^2 v^{\lambda-2}|\nabla v|^2)d\mu dt}\\
 +&\frac{1}{\lambda} \bigg( \int_\Omega \eta^2 v^\lambda d\mu \bigg\rvert_s-\int_0^s \int_\Omega{2\eta\frac{\partial \eta}{\partial t} v^{\lambda} d\mu dt}+\int_0^s \int_\Omega{\eta^2 v^{\lambda} \Sc d\mu dt}\bigg)\le \int_0^s \int_\Omega{\Big(|f|+\frac{|h|}{\kappa}\Big)\eta^2 v^{\lambda} d\mu dt}.
\end{aligned}
\end{equation}
Schwartz's inequality yields the following estimate
\begin{equation}
\int_0^s \int_\Omega{(2\eta \scal{\nabla \eta}{\nabla v} v^{\lambda-1}d\mu dt}\ge -\e^2 \int_0^s \int_\Omega{\eta^2 v^{\lambda-2}|\nabla v|^2d\mu dt}-\frac{1}{\e^2}\int_0^s \int_\Omega{v^{\lambda}|\nabla \eta|^2d\mu dt}.
\end{equation}
Substituting in the previous one we obtain, after reordering, that
\begin{equation}
\begin{aligned}
(\lambda-1-\e^2)&\int_0^s \int_\Omega{\eta^2 v^{\lambda-2}|\nabla v|^2d\mu dt}+\frac{1}{\lambda} \int_\Omega \eta^2 v^\lambda d\mu \bigg\rvert_s
\le\int_0^s \int_\Omega{\Big(|f|+\frac{|h|}{\kappa}\Big)\eta^2 v^{\lambda} d\mu dt}\\
&+\frac{1}{\e^2}\int_0^s \int_\Omega{v^{\lambda}|\nabla \eta|^2 d\mu dt}+\frac{1}{\lambda} \bigg(\int_0^s \int_\Omega{2\eta\frac{\partial \eta}{\partial t} v^{\lambda} d\mu dt}-\int_0^s \int_\Omega{\eta^2 v^{\lambda} \Sc d\mu dt}\bigg).
\end{aligned}
\end{equation}
Choose $\e^2=\frac{\lambda-1}{2}$. Since $|\nabla v^{\frac{\lambda}{2}}|^2=\frac{\lambda^2}{4}v^{\lambda-2}|\nabla v|^2$, we compute
\begin{equation}
\begin{aligned}
2\Big( 1-\frac{1}{\lambda}\Big)&\int_0^s \int_\Omega{\eta^2 |\nabla v^{\frac{\lambda}{2}}|^2 d\mu dt}+\int_\Omega \eta^2 v^\lambda d\mu \bigg\rvert_s \le \lambda \int_0^s \int_\Omega{\Big(|f|+\frac{|h|}{\kappa}+\Sc_{-} \Big)\eta^2 v^{\lambda} d\mu dt}\\
&+\frac{2\lambda}{\lambda-1}\int_0^s \int_\Omega{v^{\lambda}|\nabla \eta|^2d\mu dt}+\int_0^s \int_\Omega{2\eta\frac{\partial \eta}{\partial t} v^{\lambda} d\mu dt}.
\end{aligned}
\end{equation}
Now we use $|\nabla(\eta v^{\frac{\lambda}{2}})|^2\le 2\eta^2 |\nabla v^{\frac{\lambda}{2}}|^2+2v^\lambda |\nabla \eta|^2$ to infer that
\begin{equation}
\begin{aligned}
\Big( 1-\frac{1}{\lambda}\Big)&\int_0^s \int_\Omega{|\nabla(\eta v^{\frac{\lambda}{2}})|^2 d\mu dt}+\int_\Omega \eta^2 v^\lambda d\mu \bigg\rvert_s \le \lambda \int_0^s \int_\Omega{\Big(|f|+\frac{|h|}{\kappa}+\Sc_{-} \Big)\eta^2 v^{\lambda} d\mu dt}\\
&+2\Big(\frac{\lambda}{\lambda-1}+\frac{\lambda-1}{\lambda}\Big)\int_0^s \int_\Omega{v^{\lambda}|\nabla \eta|^2d\mu dt}+\int_0^s \int_\Omega{2\eta\frac{\partial \eta}{\partial t} v^{\lambda} d\mu dt}.
\end{aligned}
\end{equation}
Therefore we have
\begin{align}
\int_0^s \int_\Omega|\nabla(\eta v^{\frac{\lambda}{2}})|^2 d\mu dt+\int_\Omega \eta^2 v^\lambda d\mu \bigg\rvert_s \nonumber\\
\le \Lambda(\lambda) \bigg( \int_0^s \int_\Omega{\Big(|f|+\frac{|h|}{\kappa}+\Sc_{-} \Big)\eta^2 v^{\lambda} d\mu dt}+\int_0^s \int_\Omega{v^{\lambda}|\nabla \eta|^2d\mu dt}+&\int_0^s \int_\Omega{2\eta\frac{\partial \eta}{\partial t} v^{\lambda} d\mu dt}\bigg) \\
\le \Lambda(\lambda) \bigg( \mix{|f|+\frac{|h|}{\kappa}+\Sc_{-}}{a,\Omega}{b,[0,T]} \mix{\eta^2 v^{\lambda}}{a',\Omega}{b',[0,T]}+\int_0^s \int_\Omega{v^{\lambda}|\nabla \eta|^2d\mu dt}+&\int_0^s \int_\Omega{2\eta\frac{\partial \eta}{\partial t} v^{\lambda} d\mu dt}\bigg) \nonumber\\
\le \Lambda(\lambda) \bigg( C_0 \mix{\eta^2 v^{\lambda}}{a',\Omega}{b',[0,T]}+\int_0^s \int_\Omega{v^{\lambda}|\nabla \eta|^2d\mu dt}+&\int_0^s \int_\Omega{2\eta\frac{\partial \eta}{\partial t} v^{\lambda} d\mu dt}\bigg).\nonumber
\end{align}
The constant $\Lambda(\lambda)$ can be chosen as follows:
\begin{equation}
\Lambda(\lambda)=
\begin{cases}
4 \frac{\lambda^2}{(\lambda-1)^2} \ \ \ \text{if} \ 1<\lambda <2,\\
6\lambda \ \ \ \text{if} \ \lambda \ge 2.
\end{cases}
\end{equation}
In particular, we get
\begin{equation}
\begin{aligned}
\int_0^T \int_\Omega{|\nabla(\eta v^{\frac{\lambda}{2}})|^2 d\mu dt}&\le \Lambda(\lambda) \bigg( C_0 \mix{\eta^2 v^{\lambda}}{a',\Omega}{b',[0,T]}+\left\|\Big(|\nabla \eta|^2+2\eta\frac{\partial \eta}{\partial t} \Big) v^{\lambda} \right\|_{1,D} \bigg),\\
\max_{0\le s \le T}\int_\Omega \eta^2 v^\lambda d\mu \bigg\rvert_s &\le \Lambda(\lambda) \bigg( C_0 \mix{\eta^2 v^{\lambda}}{a',\Omega}{b',[0,T]}+\left\|\Big(|\nabla \eta|^2
+2\eta\frac{\partial \eta}{\partial t} \Big) v^{\lambda} \right\|_{1,D} \bigg).
\end{aligned}
\end{equation}
Applying the inequality (\ref{property}) to the function $w=\eta v^{\frac{\lambda}{2}}$,  we arrive to
\begin{equation}\label{moserpreabsorbed}
 \mix{\eta^2 v^{\lambda}}{\alpha',\Omega}{\beta',[0,T]} \le \sigma^{\frac{1}{\beta'}}\Lambda(\lambda) \bigg( C_0 \mix{\eta^2 v^{\lambda}}{a',\Omega}{b',[0,T]}+\left\|\Big(|\nabla \eta|^2
+2\eta\frac{\partial \eta}{\partial t} \Big) v^{\lambda} \right\|_{1,D} \bigg),
\end{equation}
where we have chosen the optimal integrability couple $(\alpha,\beta)=(\alpha^*,\beta^*)$ given by Lemma \ref{extrapolationlemma}. Coherently to the notation in that Lemma, we have used that
\begin{equation}
s+\frac{2}{\beta'}=\frac{\alpha'(2-n)+n}{\alpha'}+\frac{2}{\beta'}=\frac{\alpha'(2-n)+n+n(\alpha'-1)}{\alpha'}=2.
\end{equation}
Combining Lemma \ref{extrapolationlemma} and Young's inequality, we obtain
\begin{equation}
\begin{aligned}
\mix{\eta^2 v^{\lambda}}{a',\Omega}{b',[0,T]} &\le \mix{\eta^2 v^{\lambda}}{\alpha',\Omega}{\beta',[0,T]}^{s_1} \mix{\eta^2 v^{\lambda}}{1,\Omega}{1,[0,T]}^{s_2}\\
&\le \mix{\eta^2 v^{\lambda}}{\alpha',\Omega}{\beta',[0,T]} \e^{s_1} s_1+ \mix{\eta^2 v^{\lambda}}{1,\Omega}{1,[0,T]}^{\frac{s_2}{1-s_1}} \e^{-\frac{1}{1-s_1}}(1-s_1) \\
&\le  \mix{\eta^2 v^{\lambda}}{\alpha',\Omega}{\beta',[0,T]} \e^{s_1}+ \mix{\eta^2 v^{\lambda}}{1,\Omega}{1,[0,T]}^{\frac{s_2}{1-s_1}} \e^{-\frac{1}{1-s_1}}\\
&=\mix{\eta^2 v^{\lambda}}{\alpha',\Omega}{\beta',[0,T]} \e'+ \mix{\eta^2 v^{\lambda}}{1,\Omega}{1,[0,T]} \e'^{-\frac{s_1}{1-s_1}\eqqcolon -\nu},
\end{aligned}
\end{equation}
where $\e'=\e^{s_1}$. This allows us absorbing the first term in the right hand side of the inequality (\ref{moserpreabsorbed}) to the left hand side:
\begin{equation}
\big(1-\Lambda(\lambda)\sigma^{\frac{1}{\beta'}}C_0\e' \big) \mix{\eta^2 v^{\lambda}}{\alpha',\Omega}{\beta',[0,T]} \le \sigma^{\frac{1}{\beta'}}\Lambda(\lambda) \big( C_0 \e'^{-\nu}\norm{\eta^2 v^{\lambda}}_{1,D}+\big\|\big(|\nabla \eta|^2
+2\eta \partial_t \eta \big) v^{\lambda} \big\|_{1,D} \big).
\end{equation}
Setting $\e'=(2\Lambda(\lambda)\sigma^{\frac{1}{\beta'}}C_0)^{-1}$, we have
\begin{equation}
\mix{\eta^2 v^{\lambda}}{\alpha',\Omega}{\beta',[0,T]} \le 2\sigma^{\frac{1}{\beta'}}\Lambda(\lambda) \bigg( C_0 \big( 2\Lambda(\lambda)\sigma^{\frac{1}{\beta'}}C_0 \big)^{\nu}\norm{\eta^2 v^{\lambda}}_{1,D}+\left\|\Big(|\nabla \eta|^2
+2\eta\frac{\partial \eta}{\partial t} \Big) v^{\lambda} \right\|_{1,D} \bigg).
\end{equation}
Since we can always choose $\Lambda(\lambda)\ge 1$, we get
\begin{equation}
\mix{\eta^2 v^{\lambda}}{\alpha',\Omega}{\beta',[0,T]} \le C_1(n,a,b,\sigma,C_0) \Lambda(\lambda)^{1+\nu} \int_D{\Big(|\nabla \eta|^2
+2\eta\frac{\partial \eta}{\partial t}+\eta^2 \Big) v^{\lambda} d\mu dt}.
\label{moserabsorbed}
\end{equation}
The inequality (\ref{moserabsorbed}) is the basis for the Moser iteration. As in Wang's paper \cite{wan}, we will construct a nested sequence of cylindrical sets and test functions on them, and we will eventually arrive at an $L^\infty-$bound. 
Define for every natural number $k$
\begin{equation}
\begin{aligned}
t_k \coloneqq \frac{T}{2}-\frac{T}{4^{k+1}},& \ \ \ r_k \coloneqq \Big( \frac{1}{2}+\frac{1}{2^{k+1}} \Big)r,\\
\Omega_k\coloneqq B_{g(T)}(p,r_k),& \ \ \ D_k \coloneqq \Omega_k \times [t_k,T].
\end{aligned}
\end{equation}
Notice that $t_k \nearrow \frac{T}{2}$, $r_k \searrow \frac{1}{2}r$ and $\Omega_k,D_k \searrow \Omega',D'$ respectively. Fix two functions $\gamma,\rho \in C^\infty(\R)$ such that 
\begin{equation}
\begin{aligned}
0\le \gamma' \le 2, \ \ \ \gamma(t)=
\begin{cases}
0 \ \ \text{if} \ \ t \le 0\\
1 \ \ \text{if} \ \ t \ge 1
\end{cases}, \ \
-2\le \rho'\le 0, \ \ \ \rho(s)=
\begin{cases}
1 \ \ \text{if} \ \ s \le 0\\
0 \ \ \text{if} \ \ s \ge 1
\end{cases}.
\end{aligned}
\end{equation}
Set $\gamma_k(t)\coloneqq \gamma(\frac{t-t_{k-1}}{t_k-t_{k-1}})$ and $\rho_k(s)\coloneqq \rho(\frac{s-r_k}{r_{k-1}-r_k})$, and consider the sequence of cut-off functions given by
\begin{equation}
\eta_k(x,t) \coloneqq \gamma_k(t) \rho_k(d_{g(T)}(x,p)).
\end{equation}
Clearly, $\eta_k$ is a smooth function such that $0\le \eta_k \le 1$, $\eta_k \equiv 1$ on $D_k$ and $\eta_k \equiv 0$ outside $D_{k-1}$. The lower Ricci bound together with the Ricci flow equation gives us
\begin{equation}
\big\lvert \frac{\partial \eta_k}{\partial t} \big\rvert \le \frac{2}{3T}4^{k+1}, \ \ \text{and} \ \ |\nabla \eta_k|_{g(t)} \le e^{2BT} 2^{k+2}r^{-1} \ \ \forall t \in [0,T].
\end{equation}
If $\lambda \ge 2$, recall $\Lambda(\lambda)=6\lambda$, we compute from (\ref{moserabsorbed})
\begin{equation}
\begin{aligned}
\mix{v^{\lambda}}{\alpha',\Omega_k}{\beta',[t_k,T]} &= \mix{\eta_k^2 v^{\lambda}}{\alpha',\Omega_k}{\beta',[t_k,T]}\le \mix{\eta_k^2 v^{\lambda}}{\alpha',\Omega_{k-1}}{\beta',[t_{k-1},T]} \\
&\le C_2(n,a,b,\sigma,C_0) \lambda^{1+\nu} \int_{D_{k-1}}{\Big(|\nabla \eta_k|^2
+2\eta_k\frac{\partial \eta_k}{\partial t}+\eta_k^2 \Big) v^{\lambda} d\mu dt}\\
&\le 4^{k+2}C_3(r,T,B)C_2(n,a,b,\sigma,C_0) \lambda^{1+\nu} \int_{D_{k-1}}{ v^{\lambda} d\mu dt} \\
&\le C_4(n,a,b,\sigma,C_0,r,T,B)4^{k-1}\lambda^{1+\nu} \norm{v^{\lambda}}_{1,D_{k-1}}.
\end{aligned}
\end{equation}
Let us remark that $C_3(r,T,B)$ can be chosen to be $\max \{ e^{4BT}/r^2,1/(3T),1\}$.
It is convenient to rewrite this as
\begin{equation}
\mix{v}{\alpha'\cdot \lambda,\Omega_k}{\beta' \cdot \lambda,[t_k,T]} \le C_4^{\frac{1}{\lambda}} 4^{\frac{k-1}{\lambda}} \lambda^{\frac{1+\nu}{\lambda}} \norm{v}_{\lambda,D_{k-1}}.
\end{equation}
Let us set $\zeta=\min \{\alpha',\beta'\}$, $\alpha_k=\zeta^{k-1} \alpha'$, and $\beta_k=\zeta^{k-1} \beta'$. Applying the inequality above with $\lambda=\zeta^{k-1}$ we obtain
\begin{equation}
\mix{v}{\alpha_k,\Omega_k}{\beta_k,[t_k,T]} \le C_4^{\frac{1}{\lambda}} 4^{\frac{k-1}{\lambda}} \lambda^{\frac{1+\nu}{\lambda}} \norm{v}_{\zeta^{k-1},D_{k-1}}\le C_4^{\frac{1}{\lambda}} 4^{\frac{k-1}{\lambda}} \lambda^{\frac{1+\nu}{\lambda}} \mix{v}{\alpha_{k-1},\Omega_{k-1}}{\beta_{k-1},[t_{k-1},T]}.
\end{equation}
Using $\lambda=\zeta^{k-1},\zeta^{k-2},...,\zeta^{k_0}$, where $k_0$ is the smallest integer such that $\zeta^{k_0} \ge 2$, a simple iteration implies
\begin{equation}
\begin{aligned}
\mix{v}{\alpha_k,\Omega_k}{\beta_k,[t_k,T]} \le& C_4^{\frac{1}{\zeta^{k-1}}+\frac{1}{\zeta^{k-2}}+...+\frac{1}{\zeta^{k_0}}} 4^{\frac{k-1}{\zeta^{k-1}}+\frac{k-2}{\zeta^{k-2}}+...+\frac{k_0}{\zeta^{k_0}}} \zeta^{(1+\nu)\big( \frac{k-1}{\zeta^{k-1}}+\frac{k-2}{\zeta^{k-2}}+...+\frac{k_0}{\zeta^{k_0}} \big)} \cdot\\
&\cdot\mix{v}{\alpha_{k_0},\Omega_{k_0}}{\beta_{k_0},[t_{k_0},T]}\le C_5(n,a,b,\sigma,C_0,r,T,B) \mix{v}{\alpha_{k_0},\Omega_{k_0}}{\beta_{k_0},[t_{k_0},T]},
\end{aligned}
\end{equation}
where in the last step we used that any of the exponents in consideration can be bounded by a summable series, whose value is independent of $k$.
To cover the left cases $\lambda<2$, if needed we can iterate directly (\ref{moserabsorbed}) (a finite amount of times independent of $k$) with the different definition of $\Lambda(\lambda)$, and we obtain
\begin{equation}
\mix{v}{\alpha_{k_0},\Omega_{k_0}}{\beta_{k_0},[t_{k_0},T]}\le C_6(n,a,b,\sigma,C_0,r,T,B) \mix{v}{\alpha',\Omega_0}{\beta',[\frac{T}{4},T]}.
\end{equation}
Resuming, we have
\begin{equation}
\mix{v}{\alpha'_k,\Omega_k}{\beta'_k,[t_k,T]} \le C_7(n,a,b,\sigma,C_0,r,T,B) \mix{v}{\alpha',\Omega_0}{\beta',[\frac{T}{4},T]},
\end{equation}
therefore from the inclusions of the domains
\begin{equation}
\mix{v}{\alpha'_k,\Omega'}{\beta'_k,[\frac{T}{2},T]} \le \mix{v}{\alpha'_k,\Omega_k}{\beta'_k,[t_k,T]} \le C_7 \mix{v}{\alpha',\Omega_0}{\beta',[\frac{T}{4},T]} \le C_7 \mix{v}{\alpha',\Omega}{\beta',[0,T]}.
\end{equation}
With $k$ going to infinity, both $\alpha'_k$ and $\beta'_k$ tend to infinity, so we obtain
\begin{equation}
\norm{v}_{\infty,D'} \le C \mix{v}{\alpha',\Omega}{\beta',[0,T]},
\end{equation}
using Lemma \ref{parameterstoinfty}.
Notice that the conditions in this Lemma are satisfied by the domain $\Omega'$, because the flow is smooth on the whole time interval. Since $u\ge 0$, using the definition of $v$ we finally get
\begin{equation}
\begin{aligned}
\norm{u}_{\infty,D'} &\le \norm{v}_{\infty,D'} \le C \mix{v}{\alpha',\Omega}{\beta',[0,T]} \le C (\mix{u}{\alpha',\Omega}{\beta',[0,T]}+\kappa \mix{1}{\alpha',\Omega}{\beta',[0,T]})\\
&\le C (\mix{u}{\alpha',\Omega}{\beta',[0,T]}+\mix{h}{a,\Omega}{b,[0,T]} \mix{1}{\alpha',\Omega}{\beta',[0,T]}).
\end{aligned}
\end{equation}
\end{proof}
\end{lemma}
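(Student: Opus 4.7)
The plan is to carry out a Moser iteration along the Ricci flow, using the Sobolev-type inequality (\ref{property}) at each step, with the extrapolation Lemma \ref{extrapolationlemma} playing the role that a direct Hölder estimate plays in the usual parabolic Moser argument. To handle the inhomogeneity I set $\kappa \coloneqq \mix{h}{a,\Omega}{b,[0,T]}$ and $v \coloneqq u+\kappa$, so that (\ref{equation1}) becomes
\begin{equation*}
\partial_t v - \Delta v \le \bigl(|f|+|h|/\kappa\bigr) v,
\end{equation*}
with $\bigl\||f|+|h|/\kappa\bigr\|_{a,\Omega;b,[0,T]}\le C_0$. Testing against $\eta^2 v^{\lambda-1}$, integrating over $\Omega\times[0,s]$, using the evolution equation for $d\mu_{g(t)}$ and integrating by parts in space produces a term involving $\Sc$ (which I bound below by $-\Sc_-$) plus a cross term that I absorb by Cauchy--Schwarz with a small $\varepsilon^2=(\lambda-1)/2$. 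After rearranging, I obtain the standard energy estimate
\begin{equation*}
\int_0^s\!\!\int_\Omega |\nabla(\eta v^{\lambda/2})|^2\,d\mu\,dt + \sup_{s\in[0,T]}\!\int_\Omega \eta^2 v^\lambda\,d\mu
\le \Lambda(\lambda)\!\int_0^T\!\!\int_\Omega \!\Bigl[\bigl(|f|+|h|/\kappa+\Sc_-\bigr)\eta^2 v^\lambda + \bigl(|\nabla\eta|^2+2\eta\partial_t\eta\bigr)v^\lambda\Bigr]d\mu\,dt,
\end{equation*}
where $\Lambda(\lambda)\sim \lambda$ for $\lambda\ge 2$ and $\Lambda(\lambda)\sim(\lambda-1)^{-2}$ near $\lambda=1$.

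The key step is to promote this estimate to a reverse--Hölder-type inequality on mixed norms. Applying (\ref{property}) with $w=\eta v^{\lambda/2}$ (at the optimal couple $(\alpha_*,\beta_*)$ from Lemma \ref{extrapolationlemma}), and using the identity $s+2/\beta'=2$, converts the energy estimate into a bound on $\mix{\eta^2 v^\lambda}{\alpha_*',\Omega}{\beta_*',[0,T]}$. On the right-hand side, the term $\int(|f|+\Sc_-+|h|/\kappa)\eta^2 v^\lambda$ is estimated by Hölder as $C_0 \mix{\eta^2 v^\lambda}{a',\Omega}{b',[0,T]}$. This is where the super-optimality of $(a,b)$ becomes essential: Lemma \ref{extrapolationlemma} interpolates $(a',b')$ between the optimal $(\alpha_*',\beta_*')$ and $(1,1)$, so
\begin{equation*}
\mix{\eta^2 v^\lambda}{a',\Omega}{b',[0,T]} \le \mix{\eta^2 v^\lambda}{\alpha_*',\Omega}{\beta_*',[0,T]}^{s_1}\,\mix{\eta^2 v^\lambda}{1,\Omega}{1,[0,T]}^{s_2},
\end{equation*}
with $s_2/(1-s_1)=1$; Young's inequality then absorbs the first factor into the left-hand side, yielding
\begin{equation*}
\mix{\eta^2 v^\lambda}{\alpha_*',\Omega}{\beta_*',[0,T]} \le C_1\,\Lambda(\lambda)^{1+\nu}\!\int_D\!\bigl(|\nabla\eta|^2 + 2\eta\partial_t\eta+\eta^2\bigr)v^\lambda\,d\mu\,dt,
\end{equation*}
for a constant $C_1=C_1(n,a,b,\sigma,C_0)$ and $\nu=s_1/(1-s_1)$ from Lemma \ref{extrapolationlemma}.

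Now the iteration is set up in the standard way. I define a nested sequence of cylinders $D_k = B_{g(T)}(p,r_k)\times[t_k,T]$ with $r_k\searrow r/2$, $t_k\nearrow T/2$, and cutoffs $\eta_k$ with $|\nabla\eta_k|_{g(t)}\le e^{2BT}\,2^{k+2}/r$ (using the lower Ricci bound to compare $g(t)$- and $g(T)$-distances on $D$) and $|\partial_t\eta_k|\le \tfrac{2}{3T}4^{k+1}$. Applying the estimate above on $D_{k-1}$ with $\lambda=\zeta^{k-1}$, where $\zeta\coloneqq\min(\alpha_*',\beta_*')>1$, I obtain
\begin{equation*}
\mix{v}{\zeta^{k-1}\alpha_*',\Omega_k}{\zeta^{k-1}\beta_*',[t_k,T]} \le C_4^{1/\zeta^{k-1}}4^{(k-1)/\zeta^{k-1}}\zeta^{(1+\nu)(k-1)/\zeta^{k-1}}\mix{v}{\zeta^{k-2}\alpha_*',\Omega_{k-1}}{\zeta^{k-2}\beta_*',[t_{k-1},T]},
\end{equation*}
and the iteration closes because $\sum k\zeta^{-k}<\infty$. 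A finite initial run handles the finitely many exponents $\lambda<2$, where $\Lambda(\lambda)$ has a different form. Letting $k\to\infty$ and invoking Lemma \ref{parameterstoinfty} (whose hypotheses hold since $\Omega'$ has volume uniformly bounded above and below on $[T/2,T]$ by Lemma \ref{volumebound} and the smoothness of $g(t)$) gives $\|v\|_{\infty,D'}\le C\,\mix{v}{\alpha_*',\Omega}{\beta_*',[0,T]}$. Unwinding $v=u+\kappa$ and estimating $\mix{\kappa}{\alpha_*',\Omega}{\beta_*',[0,T]}=\kappa\,\mix{1}{\alpha_*',\Omega}{\beta_*',[0,T]}$ produces (\ref{equation2}).

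The main obstacle is precisely the super-optimal gap: the Sobolev inequality (\ref{property}) naturally produces an optimal couple $(\alpha_*',\beta_*')$ on the left, whereas the nonlinear term coming from $f$ and $\Sc_-$ can only be controlled, via Hölder, by the $(a',b')$-norm dictated by the hypotheses. These two couples disagree, and a direct absorption is impossible. The extrapolation Lemma \ref{extrapolationlemma} is the mechanism that reconciles them — with the crucial consequence that $\nu$ is finite only because $(a,b)$ is \emph{strictly} super-optimal (so that $s_1<1$). This is also why the same proof strategy cannot handle the critical case $(a,b)=(\alpha_*,\beta_*)$ and why a further refinement, namely the better integrability result alluded to in Proposition \ref{betterintegrability}, will be needed in the sequel.
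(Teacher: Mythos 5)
Your proposal follows the paper's argument essentially step for step: the same auxiliary function $v=u+\kappa$, the same test function $\eta^2 v^{\lambda-1}$ with $\varepsilon^2=(\lambda-1)/2$ in the Cauchy--Schwarz step, the same application of (\ref{property}) to $w=\eta v^{\lambda/2}$ (exploiting $s+2/\beta'=2$), the same use of Lemma \ref{extrapolationlemma} followed by Young's inequality to absorb the $(a',b')$-norm, the same nested cylinders and cutoffs with the gradient bound transported via the lower Ricci bound, and the same geometric iteration with $\zeta=\min(\alpha',\beta')$ closed via the summability of $\sum k\zeta^{-k}$ and Lemma \ref{parameterstoinfty}. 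The proposal is correct and takes the same approach as the paper.
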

\begin{remark}\label{mosercauchy}
We remark that, following the same iteration scheme, we could arrive at
\begin{equation}
\norm{v}_{\infty,D'}\le C_7 \mix{v}{\alpha_l,\Omega}{\beta_l,[0,T]}
\end{equation}
for every $l \in \mathbb{N}$. In particular, choosing $l$ large enough, we can get $\alpha_l\ge\alpha$ and $\beta_l\ge\beta$, hence
\begin{equation}
\norm{v}_{\infty,D'}\le C_7 \mix{v}{\alpha,\Omega}{\beta,[0,T]}.
\end{equation}
\end{remark}
\subsection{Improved Integrability and Moser-Harnack Inequality in the Optimal Case}
In showing a Moser-Harnack type inequality for the scalar curvature in the optimal case, we cannot directly appeal to the absorption scheme used in Lemma \ref{supermoser}, but we need to impose a certain smallness of the data, see Theorem \ref{optimalmoser}; this feature is common among differential equations with super-linear forcing terms, where one can exploit a point-wise smallness of the solution to reduce the study to the linear case. Here the smallness assumption is given in an integral form rather than point-wise, thus we develop once again a Moser's iteration, this time to link the problem to the strictly super-optimal case, compare with Lemma $4.4$ in \cite{wan}. Furthermore, the proof of the following proposition directly implies an improved integrability result, see Remark \ref{integrabilityaswish}.
\begin{proposition}\label{betterintegrability}
Given $(M,g(t))$ as above, suppose there exists a constant $B \ge 0$ such that on $D$ we have $\Ric(x,t) \ge -Bg(t)$ and let $(\alpha,\beta)$ be an optimal couple. If a non-negative function $u \in C^\infty(D)$ satisfies
\begin{equation}
\frac{\partial u}{\partial t} \le \Delta u+fu+h
\label{equazione2}
\end{equation}
in the sense of distributions, where $\mix{f}{\alpha,\Omega}{\beta,[0,1]}<+\infty$, then there exist a strictly super-optimal couple $(a,b)$ and constants $C=C(n,\alpha,\beta,\sigma,r,T,B)$ and $\delta=\delta(n,\sigma,\alpha,\beta)$ such that if the smallness assumption $\mix{f}{\alpha,\Omega}{\beta,[0,T]}+\mix{\Sc_{-}}{\alpha,\Omega}{\beta,[0,T]} \le \delta$ is satisfied, then we have
\begin{equation}
\mix{u}{a,\Omega'}{b,[\frac{T}{2},T]} \le C(\mix{u}{\alpha,\Omega}{\beta,[0,T]}+\mix{h}{\alpha,\Omega}{\beta,[0,T]}\cdot \mix{1}{\alpha,\Omega}{\beta,[0,T]}).
\end{equation}
\begin{proof}
Let $v=u+\kappa$, where $\kappa=l\cdot \mix{h}{\alpha,\Omega}{\beta,[0,T]}$ for some positive constant $l$. Then $v$ solves
\begin{equation}
\partial_t v \le \Delta v+fu+h.
\end{equation}
We choose a test function of the form $\eta^2 v^{\lambda-1}$, and proceed as in the proof of Lemma \ref{supermoser} to get
\begin{equation}
\begin{aligned}
\mix{\eta^2 v^{\lambda}}{\alpha',\Omega}{\beta',[0,T]} \le& \sigma^{\frac{1}{\beta'}}\Lambda(\lambda) \bigg( \Big( \mix{f}{\alpha,\Omega}{\beta,[0,T]}+\mix{\Sc_{-}}{\alpha,\Omega}{\beta,[0,T]}+\frac{1}{l}\Big) \mix{\eta^2 v^{\lambda}}{a',\Omega}{b',[0,T]}\\
&+\left\|\Big(|\nabla \eta|^2
+2\eta\frac{\partial \eta}{\partial t} \Big) v^{\lambda} \right\|_{1,D} \bigg),
\end{aligned}
\end{equation}
Suppose now that $\mix{f}{\alpha,\Omega}{\beta,[0,T]}+\mix{\Sc_{-}}{\alpha,\Omega}{\beta,[0,T]} \le \delta_\lambda \coloneqq (4 \sigma^{\frac{1}{\beta'}} \Lambda(\lambda))^{-1}$; in order to absorb the first term of the right hand side, we choose $l_\lambda \coloneqq 4 \sigma^{\frac{1}{\beta'}} \Lambda(\lambda)+1$, so we get
\begin{equation}
\mix{\eta^2 v^{\lambda}}{\alpha',\Omega}{\beta',[0,T]} \le 2 \sigma^{\frac{1}{\beta'}} \Lambda(\lambda)\left\|\Big(|\nabla \eta|^2
+2\eta\frac{\partial \eta}{\partial t} \Big) v^{\lambda} \right\|_{1,D}.
\label{prechoice}
\end{equation}
We have arrived at a situation analogous to (\ref{moserabsorbed}).
Choosing properly the function $\eta$ we get a reverse H\"older inequality, so a better integrability. The inequality above ensures that, given the optimal couple $(\alpha,\beta)$, we can bound the $(\alpha' \lambda,\beta' \lambda)-$norm in terms of the $(\alpha,\beta)-$norm if we have $\lambda \le \min \{ \alpha, \beta \}$. On the other hand, the condition on the couple $(\alpha' \lambda,\beta' \lambda)$ to be strictly super-optimal is
\begin{equation}
\alpha' \lambda>\frac{n}{2} \frac{\beta' \lambda}{\beta' \lambda -1} \iff \lambda>\frac{n}{2}.
\end{equation} 
These conditions imply that $\min \{ \alpha, \beta \} \eqqcolon \zeta>\frac{n}{2}$, which is restrictive. In order to cover the general case, we consider the same nested family of cylinders and cut-off functions as in Lemma \ref{supermoser}. Set $\alpha_k=(\alpha')^k \zeta$ and $\beta_k=(\beta')^k \zeta$ for every $k$. Iterating the inequality (\ref{prechoice}) with $\lambda=\alpha_{k-1}>\alpha_{k-2}>...>\alpha_{1}>\beta$ if $\beta<\alpha$, or $\lambda=\beta_{k-1}>\beta_{k-2}>...>\beta_{1}>\alpha$ if $\beta>\alpha$, we obtain
\begin{equation}
\mix{v}{\alpha_k,\Omega_k}{\beta_k,[t_k,T]}\le C \mix{v}{\alpha_{k-1},\Omega_{k-1}}{\beta_{k-1},[t_{k-1},T]} \le ... \le C\mix{v}{\alpha'\zeta,\Omega_1}{\beta'\zeta,[t_1,T]}\le C \mix{v}{\alpha,\Omega}{\beta,[0,T]}.
\end{equation}
Here the value of $C$ varies from inequality to inequality, but it never approaches infinity. Since $D' \subset D_k$ for every $k$, in order to conclude it suffices to show that for the fixed couple $(\alpha,\beta)$, there exists a finite $k$ such that $(\alpha_k,\beta_k)$ is super-optimal. By definition, we need to check
\begin{equation}
(\alpha')^k\zeta>\frac{n}{2} \frac{(\beta')^k\zeta}{(\beta')^k\zeta-1}.
\end{equation}
Since the limit for $k \rightarrow +\infty$ of the left hand side is infinite, we deduce the existence of a finite $k$ satisfying this inequality, and hence the claim.
\end{proof}
\label{premoser}
\end{proposition}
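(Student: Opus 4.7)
The plan is to adapt the Moser iteration of Lemma~\ref{supermoser} to the optimal regime, where the extrapolation trick of Lemma~\ref{extrapolationlemma} is no longer available. The key observation is that, in the optimal case, the Sobolev-generated left-hand side of the energy inequality and the H\"older-produced right-hand side carry the \emph{same} mixed-norm exponents $(\alpha',\beta')$, so absorption is in principle possible --- provided the scalar coefficient produced by $\mix{f}{\alpha,\Omega}{\beta,[0,T]}+\mix{\Sc_{-}}{\alpha,\Omega}{\beta,[0,T]}$ can be made sufficiently small. This is exactly the role of the smallness hypothesis in the statement.

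Concretely, first I would set $v:=u+\kappa$ with $\kappa:=l\cdot\mix{h}{\alpha,\Omega}{\beta,[0,T]}$ for an $l>0$ to be chosen, test the differential inequality against $\eta^2 v^{\lambda-1}$, integrate by parts, control the scalar-curvature correction produced by the Ricci-flow volume element via the lower Ricci bound, and apply the Sobolev-type inequality~(\ref{property}) to the gradient energy, exactly as in the first half of the proof of Lemma~\ref{supermoser}. The result is
\begin{equation*}
\mix{\eta^2 v^\lambda}{\alpha',\Omega}{\beta',[0,T]}\le\sigma^{1/\beta'}\Lambda(\lambda)\Big[\Big(\mix{f}{\alpha,\Omega}{\beta,[0,T]}+\mix{\Sc_{-}}{\alpha,\Omega}{\beta,[0,T]}+\tfrac{1}{l}\Big)\mix{\eta^2 v^\lambda}{\alpha',\Omega}{\beta',[0,T]}+\norm{(|\nabla\eta|^2+2\eta\partial_t\eta)v^\lambda}_{1,D}\Big].
\end{equation*}
Both mixed norms on the right and left share the same exponents $(\alpha',\beta')$ because $(\alpha,\beta)$ is its own optimal companion --- a feature unavailable in the super-optimal case. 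Choosing $l_\lambda:=4\sigma^{1/\beta'}\Lambda(\lambda)+1$ and $\delta_\lambda:=(4\sigma^{1/\beta'}\Lambda(\lambda))^{-1}$ forces the coefficient of the first right-hand term to be at most $\tfrac{1}{2}$ under the smallness assumption $\mix{f}{\alpha,\Omega}{\beta,[0,T]}+\mix{\Sc_{-}}{\alpha,\Omega}{\beta,[0,T]}\le\delta_\lambda$, so that term is absorbed on the left. Taking the $\lambda$-th root yields a reverse H\"older-type estimate bounding the $(\alpha'\lambda,\beta'\lambda)$-mixed norm of $v$ on an inner cylinder by an $L^\lambda$ norm on a slightly larger one.

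Second, I would run the standard nested-cylinder iteration of Lemma~\ref{supermoser} with shrinking cylinders $\Omega_k\times[t_k,T]$ and cut-offs $\eta_k$, using at step $k$ the exponent $\lambda_k:=\min\{\alpha_{k-1},\beta_{k-1}\}$, where $(\alpha_{k-1},\beta_{k-1})$ are the mixed-norm exponents produced at the previous step and $(\alpha_0,\beta_0):=(\alpha,\beta)$. A H\"older inequality in space-time together with the volume bound from Lemma~\ref{volumebound} turns the $L^{\lambda_k}$ norm on $D_{k-1}$ into a constant multiple of $\mix{v}{\alpha_{k-1},\Omega_{k-1}}{\beta_{k-1},[t_{k-1},T]}$, giving the iterative bound $\mix{v}{\alpha_k,\Omega_k}{\beta_k,[t_k,T]}\le C\mix{v}{\alpha_{k-1},\Omega_{k-1}}{\beta_{k-1},[t_{k-1},T]}$ with $(\alpha_k,\beta_k)=(\alpha'\lambda_k,\beta'\lambda_k)$. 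Since $\alpha',\beta'>1$, the sequence $\lambda_k$ grows geometrically; combined with the algebraic identity $\alpha'\lambda>\tfrac{n}{2}\tfrac{\beta'\lambda}{\beta'\lambda-1}\iff\lambda>n/2$, after a finite number $k_0=k_0(n,\alpha,\beta)$ of iterations the couple $(\alpha_{k_0},\beta_{k_0})$ becomes strictly super-optimal. I then set $(a,b):=(\alpha_{k_0},\beta_{k_0})$ and $\delta:=\min_{j\le k_0}\delta_{\lambda_j}$, and revert from $v$ to $u$ using $u\le v$ and the definition of $\kappa$ to conclude.

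The main obstacle is arranging the iteration so that the integrability gains compound into control on a strictly super-optimal couple after only finitely many steps, with constants depending only on $(n,\alpha,\beta,\sigma,r,T,B)$ for $C$ and only on $(n,\sigma,\alpha,\beta)$ for $\delta$. Three ingredients are required: smallness of $\delta$ to close the energy inequality at every step; geometric growth of $\lambda_k$ to reach $\lambda_{k_0}>n/2$ in finite time; and the nested cylinders to absorb boundary cut-off losses. The first yields the claimed dependence of $\delta$ because only the largest $\Lambda(\lambda_{k_0})$ along the iteration matters; the second is automatic from $\alpha',\beta'>1$; and the third is the Moser mechanism already employed in Lemma~\ref{supermoser}.
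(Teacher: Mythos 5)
Your proposal is correct and follows essentially the same route as the paper: set $v=u+\kappa$ with $\kappa=l\cdot\mix{h}{\alpha,\Omega}{\beta,[0,T]}$, derive the energy estimate, use the smallness of $\mix{f}{\alpha,\Omega}{\beta,[0,T]}+\mix{\Sc_-}{\alpha,\Omega}{\beta,[0,T]}$ to absorb the principal term (picking $l_\lambda$, $\delta_\lambda$ as you did), and iterate the resulting reverse H\"older inequality over nested cylinders until a strictly super-optimal couple is reached, using the algebraic equivalence $\alpha'\lambda>\tfrac{n}{2}\tfrac{\beta'\lambda}{\beta'\lambda-1}\iff\lambda>n/2$ as the exit criterion. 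One small remark: your bookkeeping $\lambda_k:=\min\{\alpha_{k-1},\beta_{k-1}\}$ with $(\alpha_k,\beta_k)=(\alpha'\lambda_k,\beta'\lambda_k)$ is actually cleaner than the paper's notation $\alpha_k=(\alpha')^k\zeta$, $\beta_k=(\beta')^k\zeta$, which does not match the exponents genuinely produced by the recursion unless $\alpha'=\beta'$; your version tracks the iteration correctly, still yields geometric growth of $\lambda_k$ (with ratio $\min\{\alpha',\beta'\}>1$), and hence reaches $\lambda_{k_0}>n/2$ in finitely many steps.
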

\begin{remark}[Improved Integrability]\label{integrabilityaswish}
Notice that the above iteration process does not reach the endpoint case $(\alpha,\beta)=(\infty,1)$ unless $n=1$. 
Moreover, by taking a further larger $k$ we may assume $a$ and $b$ to be as large as we want.
\end{remark}
We conclude the subsection using the integrability result just obtained to deduce a Moser-Harnack inequality for the scalar curvature.
\begin{theorem}\label{optimalmoser}
Under the above assumptions there exist a constant $C=C(n,\alpha,\beta,\sigma,r,T,B)$ and a small constant $\delta=\delta(n,\sigma,\alpha,\beta)$, such that if $\mix{\Sc}{\alpha,\Omega}{\beta,[0,T]} +B \le \delta$, then we have
\begin{equation}\label{smallness}
\norm{\Sc_{+}}_{\infty,D'} \le C( \mix{\Sc}{\alpha,\Omega}{\beta,[0,T]}+B) \le C \delta.
\end{equation}
\begin{proof}
Set $\hat{\Sc} \coloneqq \Sc+nB$. By the lower bound on the Ricci tensor, with same argument as in \cite{wan}, we get the following inequality in $D$:
\begin{equation}
\frac{\partial \hat{\Sc}}{\partial t} \le \Delta \hat{\Sc} + 2(\hat{\Sc}-2B) \hat{\Sc}+2nB^2.
\end{equation}
Remark that we are in case where $\mix{1}{\alpha,\Omega}{\beta,[0,T]}\le \tilde{V}$ by Lemma \ref{volumebound}. Referring to Proposition \ref{premoser}, we set $u=\hat{\Sc}$, $f=2(\hat{\Sc}-2B)$, $h=2nB^2$, and we call $C'$ and $\delta'$ the constants given by the Proposition. Let $\delta \coloneqq \frac{\delta'}{3n\tilde{V}}$, and compute
\begin{equation}
\begin{aligned}
\mix{f}{\alpha,\Omega}{\beta,[0,T]}+\mix{\Sc_{-}}{\alpha,\Omega}{\beta,[0,T]}&=\mix{2(\hat{\Sc}-2B)}{\alpha,\Omega}{\beta,[0,T]}+\mix{\Sc_{-}}{\alpha,\Omega}{\beta,[0,T]}\\
&=\mix{2(\Sc+(n-2)B)}{\alpha,\Omega}{\beta,[0,T]}+\mix{\Sc_{-}}{\alpha,\Omega}{\beta,[0,T]}\\
&\le 3\mix{\Sc}{\alpha,\Omega}{\beta,[0,T]}+2(n-2)B\mix{1}{\alpha,\Omega}{\beta,[0,T]}\\
&\le 3n\tilde{V}(\mix{\Sc}{\alpha,\Omega}{\beta,[0,T]}+B)\le \delta'.
\end{aligned}
\end{equation}
We can apply Proposition \ref{premoser} to get the existence of a super-optimal couple $(a,b)$ such that
\begin{equation}
\mix{\hat{\Sc}}{a,\Omega'}{b,[\frac{T}{2},T]} \le C (\mix{\hat{\Sc}}{\alpha,\Omega}{\beta,[0,T]}+\mix{2nB^2}{\alpha,\Omega}{\beta,[0,T]}\mix{1}{\alpha,\Omega}{\beta,[0,T]}) \le C.
\label{supersmall}
\end{equation}
We can bound 
\begin{align}
\mix{2(\hat{\Sc}-2B)}{a,\Omega'}{b,[\frac{T}{2},T]}+\mix{\Sc_{-}}{a,\Omega'}{b,[\frac{T}{2},T]} +1 &\le 3 \mix{\hat{\Sc}}{a,\Omega'}{b,[\frac{T}{2},T]}+(n+4)B\mix{1}{a,\Omega'}{b,[\frac{T}{2},T]}+1 \nonumber\\
&\le C_0(n,\alpha,\beta,\sigma,r,T,B).
\end{align}
Notice that we can assume $\alpha_*'\le a$ and $\beta_*' \le b$ by Remark \ref{integrabilityaswish}, where $\alpha_*$ and $\beta_*$ are the exponents given by Lemma \ref{extrapolationlemma}, for which (\ref{equazione2}) holds. Now we apply the super-optimal Moser iteration Lemma \ref{supermoser} to get the existence of a constant $C=C(n,\alpha,\beta,\sigma,r,T,B)$ such that (we use H\"older's inequality, (\ref{supersmall}), $\alpha_*'\le a,\beta_*' \le b$ and the definition of $\hat{\Sc}$)
\begin{equation}
\begin{aligned}
\norm{\hat{\Sc}}_{\infty,D'} &\le C(\mix{\hat{\Sc}}{\alpha_*',\Omega}{\beta_*',[0,1]}+\mix{h}{a,\Omega}{b,[0,1]}\cdot \mix{1}{\alpha_*',\Omega}{\beta_*',[0,1]})\\
&=C(\mix{\hat{\Sc}}{\alpha_*',\Omega}{\beta_*',[0,1]}+2nB^2 \mix{1}{a,\Omega}{b,[0,1]}\cdot \mix{1}{\alpha_*',\Omega}{\beta_*',[0,1]})\\
&\le C(C \mix{\hat{\Sc}}{a,\Omega}{b,[0,1]}+2nB^2 \mix{1}{a,\Omega}{b,[0,1]}\cdot \mix{1}{\alpha_*',\Omega}{\beta_*',[0,1]})\\
&\le C(\mix{\Sc}{\alpha,\Omega}{\beta,[0,1]}+B).
\end{aligned}
\end{equation}
It suffices to notice that $\norm{\Sc_{+}}_{\infty,D'} \le \norm{\hat{\Sc}}_{\infty,D'}$ to conclude the proof.
\end{proof}
\end{theorem}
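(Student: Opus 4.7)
The plan is to reduce the optimal case to the strictly super-optimal setting of Lemma \ref{supermoser}, using Proposition \ref{betterintegrability} as an intermediate step to boost integrability; the smallness hypothesis enters precisely in order to activate the improved-integrability machinery, after which the quadratic forcing coming from the scalar curvature evolution becomes tractable.

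The starting point is the standard identity $\partial_t \Sc = \Delta \Sc + 2|\Ric|^2$. Setting $\hat\Sc \coloneqq \Sc + nB \ge 0$ and letting $\lambda_i$ denote the eigenvalues of $\Ric$, the lower bound $\Ric \ge -Bg$ makes each $\lambda_i + B$ non-negative, so
$$|\Ric|^2 + 2B\Sc + nB^2 = \sum_i (\lambda_i + B)^2 \le \Big(\sum_i (\lambda_i + B)\Big)^2 = \hat\Sc^2,$$
which, combined with the evolution of $\Sc$, yields $\partial_t \hat\Sc \le \Delta \hat\Sc + 2(\hat\Sc - 2B)\hat\Sc + 2nB^2$. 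This is precisely a differential inequality of the form \eqref{equazione2}, with the identifications $u = \hat\Sc$, $f = 2(\hat\Sc - 2B)$ and $h = 2nB^2$.

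Next I would verify the smallness hypothesis of Proposition \ref{betterintegrability}. Using the triangle inequality together with the volume bound $\mix{1}{\alpha,\Omega}{\beta,[0,T]} \le \tilde V(n,r,T,B)$ from Lemma \ref{volumebound}, a direct computation gives
$$\mix{f}{\alpha,\Omega}{\beta,[0,T]} + \mix{\Sc_-}{\alpha,\Omega}{\beta,[0,T]} \le 3n\tilde V \bigl(\mix{\Sc}{\alpha,\Omega}{\beta,[0,T]} + B\bigr),$$
so that choosing $\delta \coloneqq \delta'/(3n\tilde V)$, where $\delta'$ is the smallness threshold provided by Proposition \ref{betterintegrability}, makes the hypothesis hold. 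The proposition then produces a strictly super-optimal couple $(a,b)$ and a constant $C$ with
$$\mix{\hat\Sc}{a,\Omega'}{b,[T/2,T]} \le C\bigl(\mix{\hat\Sc}{\alpha,\Omega}{\beta,[0,T]} + \mix{h}{\alpha,\Omega}{\beta,[0,T]}\,\tilde V\bigr) \le C,$$
the last bound using $h = 2nB^2$, $B \le \delta$ and the volume control.

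The final step is to plug this improved integrability back into Lemma \ref{supermoser}, applied on $D'$ with the super-optimal couple $(a,b)$ and the same $u,f,h$. The required hypothesis $\mix{f}{a,\Omega'}{b,[T/2,T]} + \mix{\Sc_-}{a,\Omega'}{b,[T/2,T]} + 1 \le C_0$ follows from the just-established $L^{a,b}$ control on $\hat\Sc$ and Lemma \ref{volumebound}. By Remark \ref{integrabilityaswish} we may further enlarge $(a,b)$ so that the extrapolated pair $(\alpha_*',\beta_*')$ supplied by Lemma \ref{extrapolationlemma} satisfies $\alpha_*' \le a$ and $\beta_*' \le b$, which via H\"older lets us bound the $(\alpha_*',\beta_*')$-norm of $\hat\Sc$ by its $(a,b)$-norm and hence by $\mix{\Sc}{\alpha,\Omega}{\beta,[0,T]} + B$. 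Lemma \ref{supermoser} then yields $\norm{\hat\Sc}_{\infty,D'} \le C(\mix{\Sc}{\alpha,\Omega}{\beta,[0,T]} + B)$, and since $\Sc_+ \le \hat\Sc$ this is the desired conclusion. The main bookkeeping obstacle is exactly the matching of exponents across the two iterations --- one must be certain that the extrapolated couple $(\alpha_*',\beta_*')$ arising from applying Lemma \ref{extrapolationlemma} to $(a,b)$ actually lies inside the integrability delivered by Proposition \ref{betterintegrability}; Remark \ref{integrabilityaswish} provides exactly the freedom to enlarge $(a,b)$ to make this happen.
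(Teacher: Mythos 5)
Your argument reproduces the paper's proof step for step: you reduce to Lemma \ref{supermoser} via the improved integrability of Proposition \ref{betterintegrability}, using the same substitution $u=\hat\Sc$, $f=2(\hat\Sc-2B)$, $h=2nB^2$, the same smallness computation yielding $\delta=\delta'/(3n\tilde V)$, and the same invocation of Remark \ref{integrabilityaswish} to align the extrapolated exponents $(\alpha_*',\beta_*')$ with the pair $(a,b)$. The only addition is an explicit derivation of $|\Ric|^2+2B\Sc+nB^2\le\hat\Sc^2$ (correct, and a nice touch the paper delegates to Wang); otherwise this is the paper's argument, and it is sound modulo the usual radius bookkeeping that both you and the paper elide when chaining the two iterations across nested domains.
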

\subsection{Proof of Theorem \ref{extensiontheorem2}}
In this subsection we use the results of the previous sections to give a proof of Theorem \ref{extensiontheorem2}. Inspired by the proof of Theorem \ref{extensionth1} we argue by contradiction, assuming that the flow is not extensible, and deduce a contradiction from an asymptotic analysis for a sequence of rescalings. The hypotheses assumed in the statement of Theorem \ref{extensiontheorem2} naturally lead to rescale the flow with the scalar curvature. Were we rescaling at the maximal Riemann curvature scale, we would have the right bounds needed to extract a blow-up limit, hence we could get a contradiction similar to the one obtained in the proof of Theorem \ref{extensionth1}. Unfortunately, the choice of the scaling factors introduces the technical problem of the lack of compactness for the sequence. In order to deal with this issue, we apply the Moser-Harnack inequality given by Theorem \ref{optimalmoser}, which holds uniformly for the sequence of rescalings considered in view of the assumed lower Ricci bound; in fact, the uniformity of the Sobolev constant is guaranteed by Theorem \ref{uniformsobolevth}, and we consider appropriate parabolic regions for the inequality, see below. Finally, we deduce from the finiteness of the mixed integral norm considered that the sequence of rescalings uniformly approaches scalar flatness in a region containing a normalized scalar curvature point, a contradiction.
\begin{proof}
Without loss of generality, we can assume that $(\alpha,\beta)$ is an optimal couple: indeed similar to what we did in the proof of Theorem \ref{extensionth1}, it is sufficient to apply H\"older's inequality in time to reduce the problem to the optimal case.

Suppose by contradiction that the flow cannot be extended; thus Theorem $1.4$ in \cite{chm} implies that $|\Ric|$ is unbounded on $M \times [0,T)$. The assumed lower Ricci bound implies the unboundedness of the scalar curvature, $\sup_{M \times [0,T)} \Sc =+\infty$. Since the curvature tensor is bounded up to the singular time $T$, we can pick a sequence of space-time points $(x_i,t_i)$ such that $t_i \nearrow T$, and
\begin{equation}
\Sc(x_i,t_i)\ge C^{-1} \sup_{M\times [0,t_i]} \Sc(x,t).
\end{equation}
Here $C$ is any constant greater than $1$. Set $Q_i \coloneqq \Sc(x_i,t_i) \rightarrow +\infty$ and $P_i \coloneqq B_{g(t_i)}(x_i,Q_i^{-\frac{1}{2}}) \times [t_i-Q_i^{-1},t_i]$. Clearly, $\Sc \le C Q_i$ on the parabolic region $P_i$. Consider a sequence of Ricci flows on $M \times [0,1]$ defined as $g_i(t)\coloneqq Q_ig(Q_i^{-1}(t-1)+t_i)$. By construction we get
\begin{equation}
\begin{aligned}
\Sc_i(x,t)&\le C \ \ \ \forall (x,t) \in B_{g_i(1)}(x_i,1)\times [0,1] \ \ \text{and} \ \ \Sc_i(x_i,1)=1;\\
\Ric_i(x,t)&\ge -\frac{B}{Q_i} g_i(t) \ \ \ \ \ \forall (x,t) \in M \times [0,1].
\end{aligned}
\end{equation}
From this we deduce that the tensor $\Ric_i+\frac{B}{Q_i}$ is non-negative, thus
\begin{equation}
\Ric_i+\frac{B}{Q_i} \le tr(\Ric_i+\frac{B}{Q_i})g_i= \big( \Sc_i+\frac{nB}{Q_i}\big)g_i.
\end{equation}
In particular, choosing $C \le n-2$, we get for $i$ large enough
\begin{equation}
\begin{aligned}
\Ric_i(x,t)&\le (n-1) g_i(t) \ \ \ \forall (x,t) \in B_{g_i(1)}(x_i,1)\times [0,1];\\
\Ric_i(x,t)&\ge -\frac{B}{Q_i}g_i(t) \ \ \ \ \ \ \forall (x,t) \in M \times [0,1].
\end{aligned}
\end{equation}
Moreover, for $i$ large enough, $-nB \le \Sc(x,t) \le C Q_i$ on $P_i$ implies $|\Sc_i| \le C$ on $B_{g_i(1)}(x_i,1)$, so the $\kappa$-non local collapsing theorem Theorem \ref{nonlocalcollapsingtheorem} applies (with scale $2$) yielding for every scale $\rho$ the existence of a constant $\kappa=\kappa(g(0),n,T)$ such that we have the uniform lower bound
\begin{equation}
\Vol_{g_i(1)}(B_{g_i(1)}(x_i,1))= \frac{\Vol_{g(t_i)}(B_{g(t_i)}(x_i,Q_i^{-\frac{1}{2}}))}{Q_i^{-\frac{n}{2}}}\ge \kappa.
\end{equation}
Theorem \ref{uniformsobolevth} now guarantees the existence of a radius $r=r(\kappa,n)$ and a uniform Sobolev constant $\sigma=\sigma(n,r)$ for every time-slice $t \in [0,1]$ on the ball $B_{g_i(1)}(x_i,r)$. We can therefore use Theorem \ref{optimalmoser}, by setting
\begin{equation}
\begin{aligned}
\Omega_i \coloneqq B_{g_i(1)}(x_i,r),& \ \ \ \Omega_i' \coloneqq B_{g_i(1)}(x_i,\frac{r}{2}),\\
D_i \coloneqq \Omega_i \times [0,1],& \ \ \ D_i' \coloneqq \big[ \frac{1}{2},1\big].
\end{aligned}
\end{equation}
In the following we exploit the finiteness of $\mix{\Sc}{\alpha,M}{\beta,[0,T)}$, as well as its scaling invariance in the case $(\alpha,\beta)$ is an optimal couple, to compute
\begin{equation}
\begin{aligned}
\lim_{i \rightarrow +\infty}{\mix{\Sc}{\alpha,B_{g_i(1)}(x_i,r)}{\beta,[0,1)}^\beta}&=\lim_{i \rightarrow +\infty} \int_{0}^1{ \bigg( \int_{\Omega_i}{|\Sc_{g_i(t)}|^{\alpha} d\mu_{g_i(t)}}\bigg)^{\frac{\beta}{\alpha}} dt}\\
&=\lim_{i \rightarrow +\infty} \int_{t_i-Q_i^{-1}}^{t_i}{ \bigg( \int_{B_{g(t_i)}(x_i,rQ_i^{-\frac{1}{2}})}{|\Sc_{g(t)}|^{\alpha} d\mu_{g(t)}}\bigg)^{\frac{\beta}{\alpha}}dt}=0.\\
\end{aligned}
\end{equation}
Since $\mix{\Sc}{\alpha,B_{g_i(1)}(x_i,r)}{\beta,[0,1)}+\frac{B}{Q_i} \le \delta$ for $i$ large, where $\delta=\delta(n,\sigma,\alpha,\beta)$ is the one given by Theorem \ref{optimalmoser}, we can apply the theorem to obtain
\begin{equation}
\norm{(\Sc_i)_{+}}_{\infty,D_i'} \le C(n,\alpha,\beta,\sigma)( \mix{\Sc_i}{\alpha,\Omega_i}{\beta,[0,1]}+\frac{B}{Q_i}).
\end{equation}
Let us remark, that here we dropped the dependence of the constant $C$ given by Theorem \ref{optimalmoser} on the lower bound $\frac{B}{Q_i}$ because this last one can be uniformly bounded by any constant asymptotically. However, the points $x_i$ were selected in such a way that $\norm{(\Sc_i)_{+}}_{\infty,D_i'}\ge\Sc_i(x_i,1)=1$, so the inequality just obtained gives a contradiction for $i$ large enough.
\end{proof}
\appendix
\counterwithin{equation}{section}
\section{Proofs of Some Lemmas}
The goal of this appendix is to prove Lemma \ref{parameterstoinfty}. Before doing so, we show an analogous result on the averages.
\begin{lemma}
Let $(M,g(t))$ be a Ricci flow defined in $[0,T]$, and fix a subset $\Omega' \subset M$ such that $0<c \le \Vol_{g(t)}(\Omega')\le C<+\infty$ for every $t$. Then we have for any measurable $u$
\begin{equation}
\lim_{(a,b) \rightarrow (+\infty,+\infty)}{\phi(a,b)}=\sup_{\Omega' \times [0,T]}{|u|(x,t)},
\end{equation}
where
\begin{equation}
\phi(a,b) \coloneqq \bigg( \frac{1}{T}\int_0^T{ \bigg( \frac{1}{\Vol_{g(t)}(\Omega')}\int_{\Omega'}{|u|^a d\mu_{g(t)}}\bigg)^{\frac{b}{a}} dt} \bigg)^{\frac{1}{b}}.
\end{equation}
\begin{proof}
Without loss of generality we can assume $u \ge 0$ and $u\in\mathbb{L}^\infty$ (otherwise we prove it for the truncation $u_M \coloneqq \max{ \{ \min{ \{u,M\} },-M\} }$ and then let $M$ to infinity). A simple application of H\"older inequality gives that $\phi$ is a non-decreasing function of both $a$ and $b$. Set $D'\coloneqq \Omega'\times[0,T]$. We can bound $u$ from above with its essential supremum, so that we obtain 
\begin{equation}
\lim_{(a,b) \rightarrow (+\infty,+\infty)}{\phi(a,b)} \le \sup_{D'}{u(x,t)}.
\end{equation}
For any $\e>0$, by definition of essential supremum we get the existence of a $\delta>0$ such that, if we set $E \coloneqq \{ (x,t) \in D' \mid u(x,t) \ge \sup_{D'}u-\e \}$, we have
\begin{equation}
|E| \coloneqq \int_0^T{\bigg( \int_{E_t} d\mu_{g(t)}\bigg) dt}>\delta,
\end{equation}
where $E_t \coloneqq E \cap (M \times \{ t \})$. Therefore we obtain:
\begin{equation}
\begin{aligned}
\phi(a,b) &\ge \bigg( \frac{1}{T}\int_0^T \bigg( \frac{1}{\Vol_{g(t)}(\Omega')}\int_{E_t}{(\sup_{D'} u-\e)^a d\mu_{g(t)}}\bigg)^{\frac{b}{a}} dt \bigg)^{\frac{1}{b}} \label{anything2}\\
&=\bigg( \frac{1}{T}\int_0^T{ \bigg( \frac{\Vol_{g(t)}(E_t)}{\Vol_{g(t)}(\Omega')}\bigg)^{\frac{b}{a}} (\sup_{D'} u-\e)^b dt} \bigg)^{\frac{1}{b}}
\ge (\sup_{D'} u-\e) \frac{1}{C^{\frac{1}{a}}} \bigg( \frac{1}{T}\int_0^T{ \Vol_{g(t)}(E_t)^{\frac{b}{a}}dt} \bigg)^{\frac{1}{b}}.
\end{aligned}
\end{equation}
Fubini's theorem implies that if it was $\Vol_{g(t)}(E_t)=0$ for a.e. $t \in [0,T]$, we would have $|E_t|=0$ which is contradictory. Thus there exist a set $A \subset [0,T]$, with $|A|=\delta'>0$, and numbers $\delta_t>0$ such that $f(t)\coloneqq \Vol_{g(t)}(E_t) \ge \delta_t$ for every $t \in A$. From Lusin's theorem for every $\e'$ we deduce the existence of $A' \subset A$, which we can assume to be closed and hence compact, with $|A \setminus A'| \le \e'$ and $f$ continuous in $A'$, so that $f \ge \delta_{\e'}$ on $A'$. We can thus deduce
\begin{equation}
\begin{aligned}
\phi(a,b) \ge& (\sup_{D'} u-\e) \frac{1}{C^{\frac{1}{a}}} \bigg( \frac{1}{T}\int_0^T{ \Vol_{g(t)}(E_t)^{\frac{b}{a}}dt} \bigg)^{\frac{1}{b}} \\
\ge& (\sup_{D'} u-\e) \frac{1}{C^{\frac{1}{a}}} \bigg( \frac{1}{T}\int_{A'}{ \Vol_{g(t)}(E_t)^{\frac{b}{a}} dt} \bigg)^{\frac{1}{b}}\ge (\sup_{D'} u-\e) \frac{1}{\tilde{C}^{\frac{1}{a}}} |A'|^{\frac{1}{b}}.
\end{aligned}
\end{equation}
It suffices now to first let $a$ and $b$ go to infinity, and then $\e$ to zero to conclude the proof.
\end{proof}
\end{lemma}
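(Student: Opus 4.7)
The plan is to combine a monotone upper bound, immediate from $|u| \le M := \sup_{D'} |u|$ on $D' := \Omega' \times [0,T]$, with a matching lower bound obtained by localising on a set where $|u|$ is almost as large as its supremum. First I would reduce to the case $u \ge 0$ and $u \in L^\infty(D')$: if $M = +\infty$, truncating $u$ at height $N$ and sending $N \to +\infty$ forces $\phi(a,b) \to +\infty$, so both sides are infinite. For bounded non-negative $u$, Jensen's inequality on the probability spaces $\big(\Omega', \tfrac{1}{\Vol_{g(t)}(\Omega')}d\mu_{g(t)}\big)$ and $\big([0,T], \tfrac{1}{T}dt\big)$ shows that $\phi(a,b)$ is non-decreasing in each of $a$ and $b$ separately, and $\phi(a,b) \le M$ is trivial. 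Hence the limit $\phi_\infty := \lim_{(a,b) \to (+\infty,+\infty)} \phi(a,b)$ exists and satisfies $\phi_\infty \le M$.

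For the matching lower bound, fix $\e > 0$ and consider $E := \{(x,t) \in D' : u(x,t) \ge M - \e\}$, which has strictly positive space-time measure by the definition of $M$. Writing $E_t := E \cap (\Omega' \times \{t\})$ and $f(t) := \Vol_{g(t)}(E_t)$, Fubini's theorem produces a set $A \subset [0,T]$ of strictly positive Lebesgue measure on which $f > 0$. The main technical point is that $f$ is only measurable, and the factor $f(t)^{b/a}$ appearing in the lower bound could degenerate when $f(t)$ is small and $b/a$ is large. To cure this, I would invoke Lusin's theorem to extract a compact subset $A' \subset A$ of strictly positive measure on which $f$ is continuous, so that $f \ge \delta$ on $A'$ for some $\delta = \delta(\e) > 0$.

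On $A'$, restricting the inner spatial integral to $E_t$ and using the hypothesis $\Vol_{g(t)}(\Omega') \le C$ yields
\begin{equation*}
\frac{1}{\Vol_{g(t)}(\Omega')}\int_{\Omega'} u^a \, d\mu_{g(t)} \ge \frac{(M-\e)^a f(t)}{\Vol_{g(t)}(\Omega')} \ge \frac{(M-\e)^a \delta}{C}.
\end{equation*}
Raising to the power $b/a$, integrating only over $A'$, and taking the $b$-th root then yields the key estimate
\begin{equation*}
\phi(a,b) \ge (M-\e)\left(\frac{\delta}{C}\right)^{1/a}\left(\frac{|A'|}{T}\right)^{1/b}.
\end{equation*}
The crucial observation is that after the $1/b$ root the $b/a$-power on $\delta/C$ has collapsed to $1/a$, so as $(a,b) \to (+\infty,+\infty)$ both correction factors tend to $1$ irrespective of how the ratio $b/a$ behaves. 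This gives $\phi_\infty \ge M - \e$, and letting $\e \to 0^+$ finishes the proof. The main obstacle is precisely the uniform positivity of $f$ on $A'$ supplied by Lusin: without it, pointwise positivity of $f$ on $A$ is insufficient, since in the regime where $b/a$ grows the factor $f(t)^{b/a}$ could conspire to vanish in the limit.
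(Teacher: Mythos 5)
Your proposal is correct and follows essentially the same route as the paper's own proof: truncation to reduce to bounded $u$, monotonicity of $\phi$ via H\"older/Jensen, the superlevel set $E=\{u\ge \sup u-\e\}$, Fubini to find a positive-measure set of times where $\Vol_{g(t)}(E_t)>0$, and Lusin's theorem to extract a compact subset on which this volume is uniformly bounded below, yielding the same final estimate $\phi(a,b)\ge(\sup u-\e)(\delta/C)^{1/a}(|A'|/T)^{1/b}$. No gaps; the step you flag as the crux (uniform positivity of $f$ on $A'$ to control $f(t)^{b/a}$) is exactly the point the paper handles the same way.
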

\begin{proof}[Proof of Lemma \ref{parameterstoinfty}]
We easily compute
\begin{equation}
\begin{aligned}
\mix{u}{a,\Omega'}{b,[0,T]}&= \bigg(T \fint_0^T{ \Vol_{g(t)}(\Omega')^{\frac{1}{a}} \fint_{\Omega'}{|u|^a d\mu}\bigg)^{\frac{b}{a}} dt} \bigg)^{\frac{1}{b}} \\
&\le T^{\frac{1}{b}} (\sup_{[0,T]}{\Vol_{g(t)}(\Omega')})^{\frac{1}{a}} \phi(a,b)\le T^{\frac{1}{b}} C^{\frac{1}{a}} \phi(a,b).
\end{aligned}
\end{equation}
Similarly, we get
\begin{equation}
\mix{u}{a,\Omega'}{b,[0,T]}\ge T^{\frac{1}{b}} c^{\frac{1}{a}} \phi(a,b).
\end{equation}
The conclusion follows taking the limit for $a$ and $b$ going to infinity.
\end{proof}


\begin{thebibliography}{4}
\bibitem{bam} Bamler R., Convergence of Ricci flows with bounded scalar curvature, Ann. of Math., Vol. 188, no. 3, (2018), 753-831 
\bibitem{bam1} Bamler R., Zhang Q., Heat kernel and curvature bounds in Ricci flows with bounded scalar curvature, Adv. Math., Vol. 319, (2017), 396-450
\bibitem{che} Chen X., Wang B., On the Conditions to Extend Ricci flow (III), Int. Math. Res. Not. IMRN, Vol. 2013, Issue 10, (2013), 2349–2367
\bibitem{ham1} Hamilton R.S., The formation of singularities in the Ricci flow, Surv. Differ. Geom., Vol. II, (1995), 7–136
\bibitem{ham2} Hamilton R.S., A Compactness Property for Solutions of the Ricci flow, Amer. J. Math., Vol. 117, No. 3, (1995)
\bibitem{hef} He F., Remarks on the Extension of the Ricci Flow, J. Geom. Anal., Vol. 24, (2014), 81–91
\bibitem{kle} Kleiner B., Lott J., Notes on Perelman's Papers, Geom. Topol., Vol. 12, (2008)
\bibitem{kot} Kotschwar B., Munteanu O., Wang J., A local curvature estimate for the Ricci flow, J. Funct. Anal., Vol. 271, Issue 9, (2016), 2604-2630
\bibitem{le} Le N.Q., Blow up of subcritical quantities at the first singular time of the mean curvature flow, Geom. Dedicata, Vol. 151, Issue 1, (2011), 361-371 
\bibitem{les} Le N.Q., Sesum N., The mean curvature at the first singular time of the mean curvature flow, Ann. Inst. H. Poincar\'e Anal. Non Lin\'eaire, Vol. 27, Issue 6, (2010), 1441-1459
\bibitem{les1} Le N.Q., Sesum N., Remarks on curvature behavior at the first singular time of the Ricci flow, Pacific J. Math., Vol. 255, no. 1, (2010)
\bibitem{les2} Le N.Q., Sesum N., On the extension of the mean curvature flow, Math. Z., Vol. 267, Issue 3–4, (2011), 583–604
\bibitem{chm} Ma L., Cheng L., On the conditions to control curvature tensors of Ricci flow, Ann. Global Anal. Geom., vol. 37, (2010), 403–411
\bibitem{per} Perelman G., The entropy formula for the Ricci flow and its geometric applications, Arxiv: 0211159, (2002)
\bibitem{ses} Sesum N., Curvature Tensor under the Ricci Flow, Amer. J. Math., vol. 127, no. 6, (2005), 1315–1324
\bibitem{sim} Simon M., Some integral curvature estimates for the Ricci flow in four dimensions, ArXiv:1504.02623, (2015)
\bibitem{sim1} Simon M., Extending four dimensional Ricci flows with bounded scalar curvature, ArXiv:1504.02910v1, (2015)
\bibitem{shi} Shi W.-X., Deforming the metric on complete Riemannian manifolds, J. Differential Geom., vol. 30, no. 1, (1989), 223--301
\bibitem{top} Topping P.M., Lectures on the Ricci flow, London Math. Soc. Lecture Note Ser., 325, C.U.P., (2006)
\bibitem{top1} Topping P.M., Remarks on Hamilton's Compactness Theorem for Ricci flow, J. Reine Angew. Math., Vol. 692, (2014), 173-191
\bibitem{top2} Topping P.M., Applications of Hamilton’s Compactness Theorem for Ricci flow, IAS/Park City Math. Ser., Vol. 22, (2016)
\bibitem{wan} Wang B., On the Conditions to Extend Ricci Flow, Int. Math. Res. Not. IMRN, Vol. 2008, Issue 9, (2008)
\bibitem{xuy} Xu H.-W., Ye F., Zhao E.-T., Extend Mean Curvature Flow with Finite Integral Curvature, Asian J. Math., Vol. 15, No. 4, (2011), 549-556
\bibitem{yan} Yang D., $L^p$ pinching and compactness theorems for compact riemannian manifolds, Séminaire de théorie spectrale et géométrie, Vol. 6, (1987-1988), 81-89
\bibitem{ye} Ye R., Curvature Estimates for the Ricci Flow I, Calc. Var. Partial Differential Equations, Vol. 31, Issue 4, (2008), 417–437 
\bibitem{ye1} Ye R., Curvature Estimates for the Ricci Flow II, Calc. Var. Partial Differential Equations, Vol. 31, Issue 4, (2008), 439–455
\bibitem{zhaq} Zhang Q.S., Bounds on volume growth of geodesic balls under Ricci flow, Math. Res. Lett., Vol. 19, (2012), no. 1, 245–253
\bibitem{zhaq1} Zhang Q.S., A Uniform Sobolev Inequality Under Ricci Flow, Int. Math. Res. Not. IMRN, Vol. 2007, (2007)
\end{thebibliography}
\end{document}